\theoremstyle{definition}
\theoremstyle{definition}
\theoremstyle{definition}
\newtheorem{remark}{Remark}
\theoremstyle{plain}
\newtheorem{theorem}{Theorem}
\newtheorem{proposition}[theorem]{Proposition}
\newtheorem{corollary}[theorem]{Corollary}
\theoremstyle{plain}
\newcommand\com[1]{}
\newcommand\Cc{{\let\mathcal\mathscr\mathcal C}}
\newcommand\D{{\mathcal D}}
\newcommand\E{\mathcal{E}}
\newcommand\ED{\tilde{\mathcal E}}
\newcommand\g{{\frak g}}
\newcommand{\HK}{\mathcal{H}}
\newcommand{\HD}{\tilde{\mathcal{H}}}
\renewcommand\l{\lambda}
\newcommand\Ll{{\let\mathcal\mathscr\mathcal L}}
\newcommand\op[1]{\mathop{\rm #1}\nolimits}
\newcommand\p{\partial}
\newcommand\R{{\mathbb R}}
\newcommand{\sym}{\mathfrak{g}}
\newcommand{\Sym}{\mathcal{G}}
\begin{document}

 \title[Differential invariants of Kundt spacetimes]{Differential invariants of Kundt spacetimes}
 \author[B. Kruglikov]{Boris Kruglikov$^\dagger$}
\address{\hspace{-17pt}
$^\dagger$%
Department of Mathematics and Statistics, UiT the Arctic University of Norway, 9037 Troms\o\, Norway.\quad
E-mail: {\tt boris.kruglikov@uit.no}.}
\address{\hspace{-17pt}
$^\dagger$%
Department of Mathematics and Natural Sciences, University of Stavanger, 4036 Stavanger, Norway.}
 \author[E. Schneider]{Eivind Schneider$^\ddagger$}
\address{\hspace{-17pt}
$^\ddagger$%
Faculty of Science, University of Hradec Kr\'alov\'e, Rokitansk\'eho 62, Hradec Kr\'alov\'e
500 03, Czech Republic.\quad
E-mail: {\tt eivind.schneider@uhk.cz}. }
 \date{}
 \keywords{Differential invariants, Lorentzian geometry, Kundt spacetimes}

 \vspace{-14.5pt}
 \begin{abstract}
We find generators for the algebra of rational differential invariants for
general and degenerate Kundt spacetimes and relate this to other approaches to the equivalence problem for Lorentzian metrics. Special attention is given to dimensions three and four. 
 \end{abstract}

 \maketitle 

\section{Introduction}

There are various approaches for distinguishing and classifying
pseudo-Riemannian metrics, and to determine their Killing vectors,
important in mathematical relativity. 

 \begin{description}
\item[spi]
Scalar polynomial invariants are obtained by complete contractions of
the Riemann tensor, its covariant derivatives and their tensor products.
\item[cci]
 Cartan curvature invariants are obtained from structure functions of the absolute parallelism on the reduced frame bundle, and their derivatives.
\item[sdi]
Scalar differential invariants are obtained as the invariants of
the diffeomorphism pseudogroup acting in the space of jets of metrics.
 \end{description}

By a theorem of Weyl \cite{W}, spi are sufficient to distinguish
Riemannian and 
generic pseudo-Riemannian metrics. 
However, there exist non-equivalent metrics of non-positive signature
with the same spi. For instance, VSI spaces with vanishing scalar
(polynomial) invariants \cite{PPCM} are indistinguishable
from the Minkowski spacetime by spi,
and this is not related to any symmetry of the problem
\cite{Kou}. In addition, a sufficient number of spi has never been
specified in the literature\footnote{Even to specify all invariants
of the second order in dimension 4 required some efforts
\cite{ZM}.}.
For instance, while principally
known to resolve the count of Killing vectors for Riemannian metrics
\cite{Ker,S,CO}, the number and complexity of the involved spi is
beyond a reasonable computational capacity \cite{KT}.

Cartan invariants, on the other hand, are universally applicable in the
study of spacetimes in general relativity, since they do separate metrics.
They are especially popular in the form of the Cartan-Karlhede algorithm,
in the Penrose-Newman formalism, etc \cite{Kar,PR,ESEFE}.
In the original approach, the invariants live on the
Cartan frame bundle \cite{C}, and they classically correspond to covariants.
Those combinations that are invariant with respect to the structure group are
actually cci, and can be treated on the base manifold $M$.
They are obtained as components of the curvature tensor
and its covariant derivatives by normalizations of the group parameters.
The invariants are usually considered local and smooth.

Scalar differential invariants, though a classical tool in differential geometry
\cite{T}, have never been pivotal in relativity applications.
An essential difference between cci and sdi is the following.
With the approach of Élie Cartan, the frame bundle depends on the metric
and the remaining freedom is a finite-dimensional structure group
(a subgroup of the pseudo-orthogonal group). With the approach of
Sophus Lie, an infinite-dimensional transformation pseudogroup
(a subgroup of the diffeomorphism group) acts on the space of jets
of all metrics in the class and sdi are invariant functions of this action. 
With mild assumptions, the differential invariants can be assumed rational
in jet-variables \cite{KL2} and global\footnote{This means they
are only subjects to nonequalities but not to inequalities (which can
happen for smooth cci). Contrary to spi, defined for all metrics,
both cci and sdi have a domain of definition.} since 
they do separate generic orbits of the action.

In this paper, we follow the latter (sdi) approach and apply it to Lorentzian metrics that are indistinguishable by spi. These metrics are known to be contained in the
Kundt class \cite{Ku,CHPP}. Furthermore, it was shown in \cite{CHP}
(in dimensions 3 and 4) that a Lorentzian spacetime is either
weakly $I$-nondegenerate (a discrete point in the set of metrics
with the same spi; thus locally characterized by them) or is
a degenerate Kundt spacetime (this class contains the VSI spacetimes).
For related results in higher dimensions see \cite{CHP2}.

Our main goal is to distinguish degenerate Kundt spacetimes by describing the
algebra $\mathcal{A}$ of rational sdi following the theory from \cite{KL2}.  In particular, we will find a set of generators for $\mathcal{A}$. 
We do it in adapted coordinates, though the results are applicable
to Kundt metrics (general or degenerate) in any coordinate system.
We also find an invariant frame adapted to Kundt spacetimes,
thus relating to the Cartan approach. Several subclasses of Kundt metrics
have been discussed in the literature using the Cartan-Karlhede algorithm,
see \cite{MMC} and references therein. For Kundt waves in 4D we compared
both approaches, via sdi and cci, in \cite{KMS}. In the present paper
we discuss the general situation in general dimension $n$, with special attention given to dimensions 3 and 4.

The structure of the paper is as follows. In Section \ref{S2}, we introduce the necessary concepts and notations regarding Kundt spacetimes and their jets, and recall the Lie-Tresse approach to differential invariants. For Kundt spacetimes (both general and degenerate), we choose adapted coordinates that result in metric tensors of a particular shape, and we write down the Lie
pseudogroup of transformations preserving this shape. In Section \ref{S3}, we count the number of algebraically independent sdi depending on the jet-order, and provide Hilbert functions and Poincaré functions for the algebras of differential invariants.
Afterwards, in Section \ref{S4}, we find generators for the algebra of
differential invariants in general dimension. We finish by presenting a simplified version of
invariants for three- and four-dimensional Kundt spacetimes. In particular, we write down a generating set of invariants in coordinates in dimension 3. 
In the appendix we demonstrate how the class of degenerate Kundt 
spacetimes arises by consideration of relative invariants of the Lie pseudogroup of shape preserving transformations.

\section{Setup: Jets and Pseudogroups} \label{S2}

We first review the general theory and then apply it to Kundt spacetimes.

\subsection{Jets and Differential invariants}\label{S21}

The notion of jet-space formalizes the computational devise of truncated
Taylor polynomials; we refer for details to \cite{KL1}.
If $x^i$ are coordinates on $X=\R^n$ and $y^j$ are coordinates on $Y=\R^m$,
then the jet-space $J^k=J^k(X,Y)$ of $k$-jet of maps from $X$ to $Y$
has coordinates $y^j_\sigma$ for multi-indices $\sigma=(i_1,\dots,i_n)$,
$i_s\ge0$, $|\sigma|=\sum i_s\leq k$. The same applies for general $X,Y$
with local coordinates $x^i,y^j$, called independent and dependent variables,
respectively. Any map $\psi:X\to Y$, $x^i\mapsto y^j=y^j(x)$ lifts to the map
$j_k\psi:X\to J^k$ given by $x^i\mapsto y^j_\sigma=\partial y^j(x)/\p x^\sigma$.

The jet-space $J^k$ is equipped with the Cartan distribution $\Cc_k\subset TJ^k$,
where for a point $a_k\in J^k$ the space $\Cc_k(a_k)$ is spanned by
all $n$-planes $T_{a_k}j_k\psi(X)$,  $j_k\psi(X)\ni a_k$. A differential equation of order $\leq l$
can be geometrically interpreted as a submanifold $\E^l\subset J^l$,
and its solutions are integral manifolds of $\Cc_l$. 

By differentiating the defining equations for $\E^l$, we obtain the prolonged equations $\E^{l+i}$ for $i\geq 1$. A smooth solution of $\E^l$ is also a smooth solution of $\E^{l+i}$. For jet spaces, we have the projections  $\pi_{j,i} \colon J^j \to J^i$ for $i<j$, and we define $\E^k= \pi_{l,k}(\E^l)\subset J^k$ for $k=0,\dots, l-1$.  

A transformation group $G$ on $J^0=X\times Y$ or
a local Lie pseudogroup $G\subset\op{Diff}_\text{loc}(X\times Y)$
canonically lifts to a pseudogroup acting in $J^k$ by the condition
that the class of integral manifolds $j_k\psi(X)$ (or equivalently
the Cartan distribution $\Cc_k$) is preserved.
For $G\ni g:J^0\to J^0$ the prolongation is denoted by
$g^{(k)}:J^k\to J^k$. If $G$ consists of symmetries of a PDE $\E$,
then there is an induced action $g^{(k)}:\E^k\to\E^k$.

Differential invariants of order $\leq k$ are functions $f:\E^k \subset J^k\to\R$
that are constant on the orbits of the prolonged action: $f\circ g^{(k)}=f$
(when there is no PDE, one considers functions on $J^k$ that are constant on orbits).
The spaces of all such invariants $\mathcal{A}_k$ unite over $k$
into the algebra of differential invariants
 $$
\mathcal{A}=\lim\limits_{k\to\infty}\mathcal{A}_k.
 $$
One also exploits invariant derivations $\nabla:\mathcal{A}_k\to\mathcal{A}_{k+1}$
of this algebra, which are invariant horizontal vector fields. In other words, they are invariant operators of the form $a^i \D_{x^i}$, where $a^i$ are functions on $\E^r \subset J^r$ for some $r$ and $\D_{x^i}$ are total derivative operators: $\D_{x^i} (f)|_{j^{k+1} \psi} = \p_{x^i} (f \circ j^k \psi)$ for every $f \in C^{\infty}(J^k)$ and $\psi\colon X \to Y$. 

By \cite{KL2}, under mild assumptions\footnote{The pseudogroup $G$
acts transitively on $J^0$ (this can be further relaxed) and algebraically
on the fibers of the projections $J^k\to J^0$; these assumptions will be
satisfied in the case we study.},
the invariants can be assumed rational in the jet-variables
$y^j_\sigma$, $|\sigma|>0$, and even polynomial in jets of sufficiently
high order $|\sigma|>k_0$.
From now on we will suppose that $\mathcal{A}$ consists of such rational-polynomial
functions. Moreover, the main theorem of \cite{KL2} gives Lie-Tresse type
generation of $\mathcal{A}$ by a finite set of differential invariants $I_a$
and invariant derivations $\nabla_b$, so that generic $G$-orbits in
$\E^\infty \subset J^\infty$ are separated by $I_a$ and their derivatives
 $$
\nabla_BI_a=\nabla_{b_1}\cdots\nabla_{b_t}I_a.
 $$
Here $B$ denotes the multi-index $B=(b_1,\dots,b_t)$.
More precisely, for every $k$ the orbits separated by the invariants
form a Zariski open set in  $\E^k$ (so that the complement intersects any fiber  $\E^k\to J^0$
by a proper algebraic set) and starting from some jet-level $k_1$
no new singularities appear for $k>k_1$.

In general, the invariant derivations need not commute, meaning that there are nonvanishing structure functions $c_{ij}^k$, given by
 $$
[\nabla_i,\nabla_j]=c_{ij}^k\nabla_k.
 $$
When $\mathcal{A}$ contains $n$ horizontally independent invariants
$I_1,\dots,I_n$, i.e.\ such that their restriction to a generic
holonomic jet-section $j^\infty\psi$ are functionally independent,
the functions $c_{ij}^k$ can be derived from the invariants $\nabla_BI_a$.

With this approach, the equivalence problem for generic solutions $\psi$ of $\E$ with respect to $G$ is solved as follows.
Let us define the signature
 $$
\Psi:X\ni x\mapsto(I_a,\nabla_bI_a)\in\R^q
 $$
as the map with the components consisting of the basic invariants
and their first derivatives ($q$ is the total number)
computed on the submanifold $j_\infty\psi(X)\subset  \E^\infty \subset J^\infty$. Then
$\psi_1$ is $G$-equivalent to $\psi_2$ iff the {\it signature varieties}
$\Psi_1(X),\Psi_2(X)$ coincide, see \cite{C,O} 
(actually this statement can be localized in $X$). In general, the differential algebra of differential invariants is not freely generated; there are differential syzygies. In particular, this implies that not every $n$-dimensional submanifold of $\mathbb R^q$ is a signature variety since the syzygies manifest as differential constraints on the signature varieties.

We note that the definition of signature variety depends on the chosen set of generators, and so does the integer $q$.

\subsection{General Kundt spacetimes}

An $n$-dimensional Lorentzian manifold $(M,g)$ is a Kundt spacetime if it admits a null congruence that is geodesic, expansion-free,
shear-free and twist-free. In other words, there exists a vector field $\ell$ such that\footnote{All
contractions, norms and raising-lowering are with respect to $g$.
We write $\mathbb{D}^g$ for the Levi-Civita connection
to distinguish from invariant derivations $\nabla_i$ 
exploited in generation of the algebra $\mathcal{A}$.}
 $$
\|\ell\|^2=0,\quad \mathbb{D}^g_\ell\ell=0,\quad \op{Tr}(\mathbb{D}^g\ell)=0,\quad
\|\mathbb{D}^g\ell^{\text{sym}}\|^2=0,\quad \|\mathbb{D}^g\ell^{\text{alt}}\|^2=0.
 $$
 The twist-free condition is equivalent to Frobenius-integrability of $\ell^\perp$.
 Thus we have embedded integrable distributions $\R\cdot\ell\subset\ell^\perp$
of dimension 1 and codimension 1 on $M$. Let $\lambda$ denote the foliation corresponding to $\mathbb R \cdot \ell$, and $\Lambda$ the foliation corresponding to $\ell^\perp$. The (local) quotient of the corresponding foliations
$\bar M=\Lambda/\lambda$ has dimension $n-2$;
the other Kundt conditions translate to the claim that the degenerate symmetric bivector $g|_{\lambda^\perp}$
projects to a Riemannian metric $h=(h_{ij})$ on $\bar{M}$. Below we will denote by $x=(x^1,\dots,x^{n-2})$
both local coordinates on $\bar{M}$ and their pullback on $M$.

This implies the well-known claim \cite{Ku,ESEFE,CHPP} that in some
local coordinates $(u,x,v)$ on $M$ any Kundt metric can be written as follows
 \begin{equation}\label{Kundt}
g= du \left(dv+H(u,x,v)\,du+W_i(u,x,v)\,dx^i\right) + h_{ij}(u,x)\,dx^idx^j.
 \end{equation}
In these coordinates $\ell=\p_v$ and $\ell^\perp=\{du=0\}$.  We let $\bar{M}_u$ denote $\Lambda_u/\lambda$, where $\Lambda_u$ is a leaf of $\Lambda$ (leaves of $\Lambda$ are parametrized by $u$, as they are given by $u=\text{const.}$). 

\subsection{Shape-preserving transformations}

Now we determine the Lie pseudogroup $\Sym$ of diffeomorphisms preserving the class of Kundt metrics given by \eqref{Kundt}. In other words, we find the transformations preserving the shape of such metrics.
In the case $n=4$ this was done in \cite{PPCM}.

 \begin{theorem}\label{T1}
The transformations preserving the shape of \eqref{Kundt} take the form
 \begin{equation}\label{fG}
\Sym\ni\varphi: (u,x^i,v) \mapsto \left(C(u),A^i(u,x),\frac{v}{C'(u)}+B(u,x)\right),\quad
\det[A^i_{x^j}]\neq0, B_u \neq 0.
 \end{equation}
The Lie algebra $\sym$ of this Lie pseudogroup $\Sym$ consists of vector fields of the form
 \begin{equation}\label{fg}
\xi=c(u)\p_u + a^i(u,x)\p_{x^i} + \bigl(b(u,x)-c'(u)v\bigr)\p_v.
 \end{equation}
 \end{theorem}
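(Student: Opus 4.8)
The plan is to exploit the geometric structures that the shape of \eqref{Kundt} attaches invariantly to the metric, and then to read off the coordinate form of the shape-preserving transformations from their compatibility with these structures. Concretely, for a metric of the form \eqref{Kundt} one has $\ell=\p_v$ null, with $\ell^\perp=\{du=0\}=\op{span}(\p_v,\p_{x^i})$, and the restriction $g|_{\ell^\perp}$ has radical exactly $\R\cdot\ell$: indeed $g(\p_v,\p_v)=0$ and $g(\p_v,\p_{x^i})=0$ (since $dv$ occurs only in the term $du\,dv$), while the $\p_{x^i}$-block is the Riemannian metric $h_{ij}$. Thus the nested foliations $\lambda\subset\Lambda$ — the integral curves of $\ell$ and the leaves $\{u=\mathrm{const}\}$ of $\ell^\perp$ — are determined by the Kundt data $(g,\ell)$ alone. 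A transformation $\vp$ preserving the shape carries one metric \eqref{Kundt} to another of the same form, hence carries $\ell$, $\ell^\perp$, and therefore $\lambda$ and $\Lambda$, of the source to those of the target.

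First I would translate preservation of these foliations into coordinates. Preservation of $\lambda$, whose leaves are the $v$-curves, means that the new coordinates $\ti u,\ti x^i$ are constant along $\p_v$, i.e.\ $\ti u=\ti u(u,x)$ and $\ti x^i=\ti x^i(u,x)$; preservation of $\Lambda$, whose leaves are $\{u=\mathrm{const}\}$, means $\ti u$ is constant on these leaves, so $\ti u=C(u)$. Writing $\ti x^i=A^i(u,x)$ and $\ti v=V(u,x,v)$, I would then pull back a general target metric of the form \eqref{Kundt} and match coefficients. The only appearance of $dv$ is through $d\ti v=V_u\,du+V_{x^k}\,dx^k+V_v\,dv$ inside $d\ti u\,d\ti v=C'\,du\,d\ti v$, so every term carries a $du$ factor: the $dv^2$ and $dv\,dx^i$ coefficients vanish automatically, while the coefficient of $du\,dv$ equals $C'V_v$, and matching the two Kundt forms forces $C'V_v=1$, hence $V_v=1/C'(u)$ and $V=v/C'(u)+B(u,x)$. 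The $\p_{x^i}$-block pulls back to a $v$-independent Riemannian metric $h_{ij}(u,x)$ automatically, so no further constraint arises and \eqref{fG} follows. The Jacobian of $\vp$ is block-triangular with diagonal blocks $C'$, $[A^i_{x^j}]$ and $V_v=1/C'$, so its determinant equals $\det[A^i_{x^j}]$; invertibility then yields exactly the nondegeneracy conditions $C'(u)\ne0$ and $\det[A^i_{x^j}]\ne0$.

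For the Lie algebra $\sym$ I would linearize: taking a one-parameter family $\vp_t\in\Sym$ with $\vp_0=\op{id}$ and differentiating at $t=0$, write $C(u)=u+tc(u)+o(t)$, $A^i=x^i+ta^i(u,x)+o(t)$ and $B=tb(u,x)+o(t)$. Then $1/C'(u)=1-tc'(u)+o(t)$, so the $v$-component $v/C'(u)+B$ has velocity $b(u,x)-c'(u)v$, which together with the velocities $c(u)$ and $a^i(u,x)$ of the other components yields \eqref{fg}.

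The main obstacle I anticipate is the very first step: justifying rigorously that $\ell$ (equivalently $\ell^\perp$, equivalently the pair $\lambda\subset\Lambda$) is canonically attached to a metric of the form \eqref{Kundt}, so that any shape-preserving $\vp$ must respect it. The radical computation pins down $\ell$ once $\ell^\perp$ is known, but one must argue that $\ell^\perp$ itself is intrinsic; here the defining Kundt conditions on $\ell$ (geodesic, expansion-, shear-, and twist-free), which single out the null congruence up to scale, do the work, and some care is needed in any degenerate situation where more than one such congruence exists. Once this intrinsic characterization is secured, the remaining steps are the routine coefficient matching and linearization described above.
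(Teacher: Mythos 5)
Your coordinate computation and linearization reproduce exactly the paper's own (second) argument: once a shape-preserving $\vp$ is known to respect the foliations $\lambda\subset\Lambda$, writing $\vp=(C(u),A^i(u,x),V(u,x,v))$ and matching the $du\,dv$-coefficient gives $C'V_v=1$, hence $V=v/C'(u)+B(u,x)$, and differentiating a one-parameter family yields \eqref{fg}. Those steps are correct, and they are what the paper does. (Your nondegeneracy conditions $C'(u)\neq0$ and $\det[A^i_{x^j}]\neq0$ are also the right ones; the condition $B_u\neq0$ in \eqref{fG} is evidently a misprint, since the identity transformation has $B\equiv0$.)

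The genuine gap is the step you flag yourself, and your proposed way of closing it would fail. You want $\ell=\p_v$ to be intrinsically attached to each individual metric of the form \eqref{Kundt} because the Kundt conditions ``single out the null congruence up to scale''. That is false for many metrics in the class --- precisely the ones this paper cares about: the flat metric $du\,dv+\delta_{ij}dx^idx^j$ has the form \eqref{Kundt}, and every constant null vector field (not just $\p_v$, but also $\p_u$ and infinitely many others) is geodesic, expansion-, shear- and twist-free; the same non-uniqueness persists for symmetric pp-waves and other VSI spacetimes. The repair is to use the quantifier built into the definition of $\Sym$: $\vp$ must send \emph{every} metric of the form \eqref{Kundt} to one of the same form. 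Then for every such $g'$ one has $g'(\vp_*\p_v,\vp_*\p_v)=(\vp^*g')(\p_v,\p_v)=0$, i.e.\ $\vp_*\p_v$ is null for all metrics of the class simultaneously. Writing $\vp_*\p_v=\a\,\p_u+\b^i\p_{x^i}+\gamma\,\p_v$ and testing against just the two metrics $du\,dv+\delta_{ij}dx^idx^j$ and $du\,dv+du^2+\delta_{ij}dx^idx^j$ gives $\a\gamma+|\b|^2=0$ from the first and $\a^2=0$ from the difference, forcing $\a=0$, $\b=0$, so $\vp_*\p_v$ is proportional to $\p_v$. Preservation of $\ell^\perp=\{du=0\}$ then follows because it is the $g'$-orthogonal complement of $\p_v$ for any $g'$ in the class, and orthogonal complements are natural under pullback. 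With this substitute for your first step --- which is also how the paper's assertion that ``the class specifies the filtration'', invoked in the proof of Theorem \ref{T2}, should be read --- the rest of your argument goes through verbatim.
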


 \begin{proof}
One approach for proving this is to require that for $\varphi\in\Sym$ the metric $\varphi^*g$ has form \eqref{Kundt}
with some other functions $H,W_i,h_{ij}$ of the same type. This gives a PDE system that is easy to solve.
Another approach is to note that transformations preserving the filtration
$\lambda\subset\lambda^\perp$ have the form: $\varphi(u,x,v)=(U(u),X(u,x),V(u,x,v))$.
Taking into account the condition $(\varphi^*g)(\partial_u,\partial_v)=1$ yields explicit affine behavior of $V$ in $v$.
 \end{proof}

Note that the Lie pseudogroup $\Sym$ has four connected components in the smooth topology, 
but it is Zariski connected (hence $\Sym$ is the Zariski closure of the component given by $C_u>0$, $\det (A^i_{x^j})>0$).
This and the particular form of transformations imply that the condition of the global Lie-Tresse theorem \cite{KL2} are met. Thus, the algebra $\mathcal{A}$ of invariants can be assumed to consist of rational-polynomial functions, since such invariants separate orbits in general position.

\subsection{Lifts and jet-prolongations}
 The image of (\ref{Kundt}) in the bundle $S^2 T^*M$ on $M$ determines a subbundle isomorphic to the trivial bundle
 \begin{equation}\label{eN}
\pi:M\times F\to M,\ \text{ where }\ F\subset \R^N,\
N=n-1+\binom{n-1}{2}=\binom{n}{2},
 \end{equation}
 whose sections are exactly Lorentzian metrics of the form \eqref{Kundt}.
The coordinates on $M$ are independent variables $u,x^i, v$, the coordinates
on $\R^N$ are dependent variables $H,W_i,h_{ij}$ ($1\leq i\leq j\leq n-2$),
and the domain $F\subset \R^N$ is given by the requirement that the symmetric matrix defined by $h_{ij}$ is positive definite. 

Denote by $J^k\pi$ the $k$-th order jet bundle of $\pi$.
It contains the subbundle of jets of Kundt metrics given by the equation
 $$
\E^1=\{(h_{ij})_v=0\} \subset J^1 \pi 
 $$
as well as its prolongations $\E^k\subset J^k\pi$ for $k>0$, given by the
$(k-1)$ differentiations of the above conditions. We use the notation $\E^0=J^0\pi$.
The infinitely prolonged Kundt equation is $\E^\infty\subset J^\infty\pi$. 

The pseudogroup $\Sym$ (and its Lie algebra $\sym$) have the natural lift to $J^0\pi=M\times F$,
$\Sym\ni\varphi\mapsto\varphi^{(0)}\in\Sym^{(0)}\subset\op{Diff}_\text{loc}(J^0)$,
obtained from the requirement that
 $$
g=du \left( dv+H\,du+W_idx^i\right) + h_{ij}\,dx^idx^j\in\pi^*S^2T^*M
 $$
is invariant with respect to every $\varphi^{(0)}$. 
\begin{remark}
 Now $g$ is interpreted as a horizontal symmetric 2-form on $\pi$. The restriction of $g$ to a section $\psi$ of $\pi$ given by $H=H(u,x,v), W_i=W_i(u,x,v), h_{ij}=h_{ij}(u,x)$ is exactly the metric \eqref{Kundt}. The invariant tensors associated with a metric (the Riemann tensor, Ricci tensor, etc.) can be defined for the horizontal form $g$ so that the restriction of such a tensor to a section of $\pi$ gives exactly the corresponding tensor field associated to the metric \eqref{Kundt}.
\end{remark}

To get the formula for the lift following
the notations of Theorem \ref{T1} denote $A^i_j=\p_{x^j}A^i$ and
let $\check{A}^i_j$ be the inverse matrix. Denote also
$B_j=\p_{x^j}B$ and $\check{C}'=(C')^{-1}$.
Then $\varphi^{(0)}$ maps the fiber as follows:
 \begin{alignat*}{1}
h_{ij} &\mapsto \check{A}^k_i\check{A}^l_jh_{kl},\\
W_i    &\mapsto \check{C}'\check{A}^j_iW_j-\check{A}^j_iB_j-2\check{C}'\check{A}^k_i\check{A}^l_jA^j_uh_{kl},\\
H      &\mapsto \check{C}'^2H+\check{C}'^3C''v-\check{C}'B_u+\check{C}'\check{A}^j_iA^i_uB_j-\check{C}'^2\check{A}^j_iA^i_uW_j
+\check{C}'^2\check{A}^k_i\check{A}^l_jA^i_uA^j_uh_{kl}.
 \end{alignat*}

The lift of vector fields from $\sym$, with $a=a(u,x),b=b(u,x),c=c(u)$, is
(here and in what follows $a^j_i=a^j_{x^i}$, $b_i=b_{x^i}$, etc) such
 \begin{alignat*}{1}
\xi^{(0)} =&\ c\p_u + a^i\p_{x^i} + (b-c'v)\p_v -(a^l_ih_{lj}\p_{h_{ij}} + a_i^l h_{li} \p_{h_{ii}})\\
& -(c'W_i+a^j_iW_j+b_i+2a^j_uh_{ij})\p_{W_i} -(2c'H-c''v+b_u+a^j_uW_j)\p_H.
 \end{alignat*}
These prolong further to transformations $\varphi^{(k)}$ and vector fields $\xi^{(k)}$ on $J^k\pi$. Moreover, by the construction of the lift,
prolongations of the pseudogroup $\Sym$ preserve $\E$.

Note that order $k$ differential invariants of $\Sym$ are rational-polynomial functions $f$ on $\E^k\subset J^k\pi$
that satisfy
 \begin{equation}\label{LieDI}
\Ll_{\xi^{(k)}}f=0\ \forall\xi\in\sym.
 \end{equation}
However, there exist functions that satisfy this system of equations, but are not invariant under the entire Lie pseudogroup $\Sym$. An example of this is mentioned in Section \ref{S44}.

\subsection{Degenerate Kundt spacetimes}

Degenerate Kundt metrics are the Kundt metrics that satisfy the following additional conditions:
 \begin{itemize}
\item The Riemann tensor $\text{Riem}$ is aligned and of algebraically special type $II$.
\item $\nabla(\text{Riem})$ is aligned and of algebraically special type $II$. 
 \end{itemize}
In terms of \eqref{Kundt}, the first condition implies $(W_i)_{vv}=0$ while the second implies $H_{vvv}=0$. It follows that $\nabla^{(k)}(\text{Riem})$ is aligned and of algebraically special type $II$ for every positive integer $k$.  (In all cases we understand the set of type $II$ tensors to also include the more special types $III$, $D$,   etc.) 

In 4D these conditions 
imply that the metric $g$ is $I$-degenerate \cite{CHP}. The opposite,
$I$-nondegeneracy of $g$, can be defined
through the map $I:\text{(spacetimes)}\to\text{(spi)}$ as
discreteness\footnote{In \cite{CHP} a weaker requirement is stated,
but the proof implies the  stated stronger property.}
of the set $I^{-1}(I(g))$ for germs of $g$ (in localization of $M$).
Note that for a generic metric, $I^{-1}(I(g))=g$ is a one point set.

In any dimension $n$ one can show
(by varying $H$ and $W_i$ in lower $v$-degree terms) that any degenerate Kundt metric $g$
can be smoothly deformed as a family $g_\tau$ with $I(g_\tau)=\op{const}$ and the deformation is not an isotopy.

 \begin{theorem}\label{T2}
The pseudogroup of local transformations preserving the
degenerate Kundt spacetimes of shape (\ref{Kundt})
coincides with the pseudogroup $\Sym$.
 \end{theorem}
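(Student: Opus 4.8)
The plan is to prove the two inclusions $\Sym\subseteq\mathcal P$ and $\mathcal P\subseteq\Sym$, where $\mathcal P$ denotes the pseudogroup of local transformations preserving the shape of degenerate Kundt metrics. Recall that a degenerate Kundt metric is a metric of shape \eqref{Kundt} subject to the two extra conditions $(W_i)_{vv}=0$ and $H_{vvv}=0$; thus $\mathcal P$ consists of those $\varphi$ for which $\varphi^*g$ again has shape \eqref{Kundt} and satisfies these two conditions whenever $g$ does. Since $\Sym$ already handles the shape \eqref{Kundt} by Theorem \ref{T1}, the whole content of the statement is that passing to the degenerate subclass neither destroys nor creates transformations.

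For the inclusion $\Sym\subseteq\mathcal P$ I would argue that the degenerate Kundt conditions cut out a $\Sym$-invariant subset of the Kundt class. The quickest route is the explicit lift $\varphi^{(0)}$ recorded after Theorem \ref{T1}: the fiber transformation has coefficients depending only on $(u,x)$ (through $C,A,B$ and their derivatives), while the base change $v\mapsto v/C'(u)+B(u,x)$ is affine in $v$. Hence substitution into a metric with $W_i$ linear in $v$ and $H$ quadratic in $v$ produces again a $W_i$ linear in $v$ and an $H$ quadratic in $v$ (the only genuinely new $v$-term, $\check C'^3C''v$ in the formula for $H$, is itself linear). Therefore $(W_i)_{vv}=0$ and $H_{vvv}=0$ are preserved by every $\varphi\in\Sym$. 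Equivalently, and more conceptually, $H_{vvv}$ and $(W_i)_{vv}$ are components of relative invariants of $\Sym$, so their vanishing loci are $\Sym$-invariant; this is the point of view taken up in the appendix.

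For the reverse inclusion $\mathcal P\subseteq\Sym$ I would rerun the argument behind Theorem \ref{T1}, but feeding in only degenerate Kundt metrics. The two ingredients of that proof are (i) preservation of the filtration $\lambda\subset\lambda^\perp$, which forces the triangular form $\varphi(u,x,v)=(U(u),X(u,x),V(u,x,v))$, and (ii) the normalization $(\varphi^*g)(\p_u,\p_v)=1$, which forces $V$ to be affine in $v$ with $V_v=1/C'(u)$. Ingredient (ii) is available verbatim, since every degenerate Kundt metric still satisfies $g(\p_u,\p_v)=1$. For ingredient (i) I would use that the congruence $\ell=\p_v$, together with $\ell^\perp=\{du=0\}$, is invariantly attached to a degenerate Kundt metric: the degeneracy (alignment of $\text{Riem}$ and $\nabla(\text{Riem})$) singles out $\ell$ as the aligned null direction of the curvature and its covariant derivatives. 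Since $\varphi\in\mathcal P$ is an isometry $(M,\varphi^*g)\to(M,g)$ between two degenerate Kundt metrics, it must carry this canonical structure to the corresponding one, hence preserve $\lambda$ and $\lambda^\perp$. With (i) and (ii) in hand, the computation of Theorem \ref{T1} reproduces the form \eqref{fG}, so $\varphi\in\Sym$.

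The main obstacle is precisely the filtration-preservation step (i). For a general Kundt metric the Kundt congruence need not be unique, which is why Theorem \ref{T1} effectively builds preservation of the distinguished null direction into the notion of shape-preservation. Passing to the degenerate subclass one must instead verify that the alignment conditions make $\ell$ a genuine diffeomorphism invariant, so that any element of $\mathcal P$ — a priori only known to respect the degenerate shape — is forced to preserve the filtration. Once this invariance is established, the remainder is the same bookkeeping as in Theorem \ref{T1}, and combining the two inclusions yields $\mathcal P=\Sym$.
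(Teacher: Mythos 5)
Your overall strategy coincides with the paper's: prove the two inclusions, getting $\Sym\subseteq\mathcal P$ from invariance of the degeneracy conditions and $\mathcal P\subseteq\Sym$ by rerunning the proof of Theorem \ref{T1} once preservation of the filtration $\lambda\subset\lambda^\perp$ is secured. Your first inclusion is correct and in fact more explicit than the paper's one-line appeal to naturality: the lift formulas are affine in $v$ with $(u,x)$-dependent coefficients, so the $v$-degrees of $W_i$ and $H$ cannot increase under $\Sym$.

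The genuine gap is in step (i) of the reverse inclusion. Your key claim --- that the degeneracy conditions single out $\ell=\p_v$ as \emph{the} aligned null direction, so that the filtration is invariantly attached to each degenerate Kundt metric --- is false as a universal statement. The flat metric ($H=0$, $W_i=0$, $h_{ij}=\delta_{ij}$) is a degenerate Kundt metric for which every null direction is aligned; more generally, degenerate Kundt metrics of curvature type $D$ admit two aligned null directions (recall that type $II$ here includes types $D$, $III$, etc.). So the ``canonical structure transport'' argument fails precisely on such metrics, and you acknowledge in your final paragraph that this step is left unverified --- which means the proof is incomplete exactly where its burden lies. The repair is to use that $\mathcal P$ is required to preserve the \emph{whole class}, not a single metric: pick one generic degenerate Kundt metric $g_0$ of proper type $II$, for which the aligned (Kundt) congruence is unique. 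Any $\varphi\in\mathcal P$ is an isometry $(M,\varphi^*g_0)\to(M,g_0)$; both metrics have shape \eqref{Kundt}, so $\R\cdot\p_v$ is an aligned Kundt congruence for each, and since uniqueness is a diffeomorphism-invariant property, $\varphi$ must map $\R\cdot\p_v$ to $\R\cdot\p_v$ and hence $\{du=0\}$ to $\{du=0\}$. This forces the triangular form $\varphi(u,x,v)=(U(u),X(u,x),V(u,x,v))$ as a property of $\varphi$ itself, after which your step (ii) applies to every metric of the class and yields \eqref{fG}. This is the content of the paper's phrase that ``the class of degenerate Kundt metrics specifies the filtration'': it is the class, not each individual metric, that determines $\lambda\subset\lambda^\perp$.
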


 \begin{proof}
The conditions of degeneracy are natural (coordinate-independent)
and therefore are respected by any pseudogroup of transformations on $M$.
On the other hand, the class of degenerate Kundt metrics also
specifies the filtration $\l\subset\l^\perp$ used in the
preceding proof, and the shape is the same, whence the claim.
 \end{proof}

Adding the degeneracy condition to Kundt spacetimes determines a new PDE, denoted by $\ED$, which is specified by the equations
 $$
(h_{ij})_v =0,\quad (W_i)_{vv}=0,\quad H_{vvv}=0.
 $$
Including the prolongations of those conditions (that is applying total derivatives of all orders and directions)
we get the infinitely prolonged system $\ED^\infty\subset J^\infty \pi$.

More precisely, we have $\ED^k=\E^k$ for $k<2$, the submanifold $\ED^2\subset\E^2$ is given by the additional equations $(W_i)_{vv}=0$, and $\ED^3\subset\E^3$ by first derivatives of those
plus the equation $H_{vvv}=0$, etc.

By virtue of Theorem \ref{T2} the lift and prolongations of
the pseudogroup $\Sym$
restrict to the equation $\ED$ of degenerate Kundt metrics.
Differential invariants of order $k$ are rational-polynomial
functions $f$ on $\ED^k$ satisfying Lie equation \eqref{LieDI}.
Though the main target is the class
of degenerate Kundt spacetimes,
we can study simultaneously the class of general Kundt metrics.

\section{Counting the invariants}\label{S3}

Now we count the amount of (algebraically) 
independent differential invariants for both general and degenerate
Kundt spacetimes, depending on the jet-order $k$.

\subsection{Jets and equations}

At first we determine dimensions of the involved jet-spaces and
equation-manifolds. With $N=\binom{n}2$ from \eqref{eN} we have
 \[
\dim J^k \pi = n+N \binom{n+k}{n}.
 \]

There are $\binom{n-1}{2}\binom{n+k-1}{n}$ equations of order $\leq k$
specifying Kundt spacetimes of the form (\ref{Kundt}).
This number is the codimension of $\E^k \subset J^k \pi$, whence
 \begin{align*}
\dim \E^k &= n+(n-1)\binom{n+k}{n}+\binom{n-1}{2}\binom{n+k-1}{n-1}\\
&= n+(n-1)\binom{n+k-1}{n}\frac{n^2+2k}{2k}
\text{ for }\ k>0
 \end{align*}
and $\dim\E^0=n+N=\binom{n+1}{2}$.

The equation-manifolds for degenerate Kundt metrics satisfy
$\ED^0=\E^0$ and $\ED^1=\E^1$, while
$\ED^k\subset\E^k$ is given by $(n-2)$ additional constraints for $k=2$
and by $(n-2)\binom{n+k-2}{n}+\binom{n+k-3}{n}$
constraints for $k\ge3$. Thus
 \begin{align*}
\dim\ED^2 &= \left(\binom{n+1}{2}+1\right)\left(\binom{n}{2}+1\right), \\
\dim\ED^k &= \dim\E^k- (n-2)\binom{n+k-2}{n}-\binom{n+k-3}{n}\
\text{ for }\ k>2.
 \end{align*}

\subsection{Orbit dimensions}\label{S32}

The action of $\Sym$ on $J^0\pi$ is transitive, so that any point can be mapped to the point $p_0$ given by
 \[
u=0,\ x^i=0,\ v=0,\ h_{ij}=\delta_{ij},\ W_i=0,\ H=0.
 \]
The stabilizer in $\sym$ of the point $p_0$ is given by
 \begin{equation*}
a^i=b=c=b_u=0,\ b_i=-2 a^i_u,\ a^i_j=-a^j_i.
 \end{equation*}
These conditions imposed on the jets of pseudogroup elements preserving
$p_0$ define an algebraic (finite-dimensional) group $\Sym^{(k)}_0$
acting in the fibers $J^k_0\pi\supset\E^k_0\supset\ED^k_0$ over $p_0$.
The invariants of this action bijectively correspond to differential
invariants of order $k$ of $\Sym$.

By Rosenlicht's theorem for an algebraic action its field of
rational invariants separates generic orbits, and the transcendence
degree of this field is equal to the codimension of a generic orbit.
Of course, codimension of orbits of $\Sym$ on $\E^k$ or $\ED^k$
equals codimension of orbits of $\Sym_0$ on $\E^k_0$ or $\ED^k_0$, respectively.

 \begin{theorem}\label{T3}
For $k=1$ the codimension of an orbit in general position in
$\E^1=\ED^1$ is 1.
For $k\geq2$ the dimension of an orbit in general position both in
$\E^k$ and in $\ED^k$ is given by
 \[
(n-1) \binom{n+k}{n-1}+k+2.
 \]
 \end{theorem}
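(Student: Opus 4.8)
The plan is to realize the orbit dimension as the rank of the infinitesimal action and to compute that rank at a convenient point. By Rosenlicht's theorem the generic orbit codimension equals the transcendence degree of the field of rational invariants, and the transitivity of $\Sym$ on $J^0\pi$ lets me evaluate at a point over $p_0$; so it suffices to determine, at a point $a_k$ in general position, the rank of the linear evaluation map
 \[
\op{ev}_{a_k}\colon \sym\to T_{a_k}\E^k,\qquad \xi\mapsto\xi^{(k)}(a_k).
 \]
I would take $a_k$ over $p_0$, i.e.\ with $u=x=v=0$, $h_{ij}=\delta_{ij}$, $W_i=H=0$, and generic jets of order $>0$.

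First I would pin down which Taylor coefficients of the functional parameters $c(u),a^i(u,x),b(u,x)$ enter $\op{ev}_{a_k}$. Prolonging the level-$0$ formula for $\xi^{(0)}$ and inducting on $|\sigma|$, the coefficient of $\p_{y^j_\sigma}$ involves $c$ up to order $|\sigma|+2$ — but only through the term $c''v$ in the $\p_H$-component, which vanishes at our point because $v=0$ — and involves $a^i,b$ up to order $|\sigma|+1$. Hence on the fibre over $p_0$ the parameters entering $\op{ev}_{a_k}$ are exactly the jets of $c$ of orders $0,\dots,k+1$ together with the jets of orders $0,\dots,k+1$ of the $n-1$ functions $a^1,\dots,a^{n-2},b$ of the $n-1$ variables $(u,x)$. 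Their number is
 \[
(k+2)+(n-1)\binom{n+k}{n-1},
 \]
which already equals the asserted orbit dimension; the theorem therefore reduces to the claim that $\op{ev}_{a_k}$ is injective at a generic such $a_k$, i.e.\ that the generic isotropy in $\sym$ is trivial.

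This injectivity is the heart of the matter, and I expect it to be the main obstacle. I would prove it by induction on $k$, filtering source and target by jet-order. The top blocks — the order-$(j+1)$ parameter jets feeding into the order-$j$ jets of $(H,W_i,h_{ij})$ for $j\ge1$ — have constant-coefficient leading symbols at $h_{ij}=\delta_{ij}$, read off from the terms $-2a^j_uh_{ij}$, $-b_i$ in the $\p_{W_i}$-component and $-(a^l_ih_{lj}+a^l_jh_{li})$ in the $\p_{h_{ij}}$-component, and one checks that they inject the space of top-order parameter jets into that of top-order metric jets. The residual difficulty is the bottom block: the prolongation of the level-$0$ isotropy subalgebra $\mathfrak{st}_0$ (the conditions $a^i_j=-a^j_i$, $b_i=-2a^i_u$, $b_u=0$) acts trivially on order-$0$ data but nontrivially on higher jets, with coefficients that are the generic jet-values of the metric. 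Verifying that these residual directions are \emph{all} detected — which forces one to use genericity of $a_k$, and which first succeeds at order $2$ — is exactly what makes the clean answer hold for $k\ge2$ and fail for $k=1$. I would establish the base case $k=2$ by a direct rank computation and propagate it upward.

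Two points then remain. For the degenerate class I would run the same computation: the evaluation map is unchanged and its image lies in $T_{a_k}\ED^k$ by Theorem~\ref{T2}; since the constraints cutting out $\ED^k$ suppress only the high $v$-derivatives $(W_i)_{vv}$, $H_{vvv}$ and their prolongations, whereas the parameters above are detected through $(u,x)$- and low-order $v$-jets, for $k\ge2$ the detecting directions survive and the rank — hence the orbit dimension — is unchanged. The exceptional case $k=1$ I would treat by the same computation specialized to first jets: there $\op{ev}_{a_1}$ acquires a nontrivial kernel (the residual isotropy is not yet resolved at order one), and a direct evaluation shows the generic orbit has codimension $1$ in $\E^1=\ED^1$.
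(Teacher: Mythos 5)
Your proposal is correct and follows essentially the same route as the paper's proof: transitivity reduces the problem to the rank of the prolonged (stabilizer) action over $p_0$, the parameter count $(n-1)\binom{n+k}{n-1}+k+3$ drops by one because $c^{(k+2)}$ enters only through $c''v$ and so vanishes at $v=0$, and the theorem then rests on generic independence of the resulting jets, established at order $2$ and propagated to all higher orders. Your upward propagation via injectivity of the top symbol blocks is exactly the ``persistence of freeness in prolongation'' that the paper invokes from Olver--Pohjanpelto \cite{OP}, and, like the paper (which omits its ``tedious verification''), you leave the order-$1$ and order-$2$ rank computations as direct checks.
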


 \begin{proof}
Consider the action of the stabilizer $\Sym^{(1)}_0$ on $\E^1_0$.
A straightforward verification shows that $\sum_{i=1}^{n-2} (W_i)_v^2$ is
an invariant. Now we use the pseudogroup to normalize a point in $\E^1_0$ by sequentially fixing a set of coordinates, thus restricting to a sequence of submanifolds. We simultaneously fix parameters of the Lie algebra, so that the remaining vector fields are tangent to the current submanifold.
 \begin{enumerate}
\item
Bring the point to the submanifold given by $(h_{ij})_k=0,\ (h_{ij})_u=0$.
The Lie subalgebra preserving the submanifold is restricted further by
$a^i_{jk}=0, a^i_{ju}=-a^j_{iu}$.
\item
Fix $(W_i)_j=0$. The stabilizer of this new submanifold is given by the additional equations $b_{ij}=(W_i)_va^j_u+(W_j)_va^i_u$,
$a^i_{ju}= \frac12((W_i)_va^j_u-(W_j)_va^i_u)$.
\item
Fix $(W_i)_u=0$. The new stabilizer is given by $b_{iu} =-2a^i_{uu}$.
\item
Fix $H_u=H_i=H_v=0$. The new stabilizer is given by $a^i_{uu}=b_{uu}=0$,
$c_{uu}=(W_i)_v a^i_u$.
\item
The remaining stabilizer is $CO(n-2)\ltimes\R^{n-2}$, and its subgroup $SO(n-2)$ acts nontrivially on the covector $(W_i)_v$, so we fix it so: 
$(W_2)_v=\dots=(W_{n-2})_v=0$. Then $(W_1)_v^2$ is the value of the above invariant.
\end{enumerate}
For the action of $\Sym^{(1)}_0$ on $\E^1_0$ the stabilizer of
a generic point $p_1$ has dimension $\binom{n-2}2+2$,
in particular the action is not free.

The same approach works in higher jets: by choosing a specific point
$p_k\in\ED^k_0$ we compute the rank of all $k$-jets of
vector fields $\xi\in\g$ at $p_k$. The totality of those fields may
be thought to be the number of free jets of group parameters
entering the fields $\xi^{(k)}$, which is
$(n-1)\binom{n+k}{n-1}+k+3$.
However, over $p_0$ the coefficient of $c^{(k+2)}$ vanishes (since $v=0$),
the corresponding field is in the kernel of the action, 
and therefore the group $\Sym_0^{(k)}$ has dimension 1 less than
the indicated number.

Now a tedious verification, which we omit, shows that these vector fields
are actually independent, so the orbit has the dimension as stated,
and the action is free for $k\ge2$ (by definition this means
that $\Sym_0^{(k)}$ acts freely).

An alternative route is to check (in the same manner as for 1-jets) that
the action is free on a Zariski open subset of $\ED^2_0$ (hence also
on a Zariski open subset of $\E^2_0$). Therefore, from the persistence of
freeness in prolongation \cite{OP}, the claim follows.
 \end{proof}

\subsection{Hilbert and Poincar\'e functions}\label{S33}

Let $s_k^n$ denote the codimension of an orbit in general position in $\E^k$. Define the Hilbert function for the action of $\Sym$ on $\E$
(or for the quotient $\E/\Sym$):
$\HK_k^n=s_k^n-s_{k-1}^n$ and $\HK_0^n=s_0^n$.
From Theorem \ref{T3} we conclude:
 \begin{proposition}
The Hilbert function for $\E/\Sym$ is given by $\HK_0^n=0$, $\HK_1^n=1$,
\begin{align*}
\HK_2^n &= n-5+(n-1) \left(\tbinom{n+2}{n}-\tbinom{n+2}{n-1}\right)+\tbinom{n-1}{2} \tbinom{n+1}{n-1} \\
&= \frac{n^4-4n^3+11n^2+16n-72}{12}, \\
\HK_k^n &= (n-1)\left( \tbinom{n+k-1}{n-1}-\tbinom{n+k-1}{n-2}\right)+\tbinom{n-1}{2} \tbinom{n+k-2}{n-2} -1\ \text{ for }k\geq3.
\end{align*}
 \end{proposition}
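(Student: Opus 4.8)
The plan is to read off $s_k^n$, the codimension of a generic orbit in $\E^k$, directly from the two inputs already established: the formula for $\dim\E^k$ from the previous subsection and the generic orbit dimension $d_k=(n-1)\binom{n+k}{n-1}+k+2$ (valid for $k\geq2$) from Theorem \ref{T3}. Thus $s_k^n=\dim\E^k-d_k$ for $k\geq2$, while $s_0^n=0$ (the action on $J^0\pi$ is transitive) and $s_1^n=1$ (directly from Theorem \ref{T3}). The first two values of the Hilbert function are then immediate: $\HK_0^n=s_0^n=0$ and $\HK_1^n=s_1^n-s_0^n=1$.

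For $k\geq3$ both $s_k^n$ and $s_{k-1}^n$ are given by the same expression $\dim\E^k-d_k$, so I would form the difference $\HK_k^n=s_k^n-s_{k-1}^n$ termwise. The constant $n$ in $\dim\E^k$ cancels, the contribution $-(k+2)-(-(k+1))=-1$ survives, and each binomial block collapses via Pascal's rule: $\binom{n+k}{n}-\binom{n+k-1}{n}=\binom{n+k-1}{n-1}$, $\binom{n+k-1}{n-1}-\binom{n+k-2}{n-1}=\binom{n+k-2}{n-2}$, and $\binom{n+k}{n-1}-\binom{n+k-1}{n-1}=\binom{n+k-1}{n-2}$. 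Collecting these yields exactly
\[
\HK_k^n=(n-1)\Bigl(\tbinom{n+k-1}{n-1}-\tbinom{n+k-1}{n-2}\Bigr)+\tbinom{n-1}{2}\tbinom{n+k-2}{n-2}-1,
\]
the stated formula for $k\geq3$.

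The case $k=2$ must be treated separately, and this is the only subtle point. Here $\HK_2^n=s_2^n-s_1^n$ with $s_2^n=\dim\E^2-d_2$ but $s_1^n=1$, and the value $1$ is \emph{not} what one obtains by extrapolating the orbit-dimension formula $d_k$ down to $k=1$. The reason is that the prolonged action is free only for $k\geq2$; at the first jet level the generic stabilizer has positive dimension $\binom{n-2}{2}+2$ (computed in the proof of Theorem \ref{T3}), so $d_1$ is correspondingly smaller. Substituting $\dim\E^2=n+(n-1)\binom{n+2}{n}+\binom{n-1}{2}\binom{n+1}{n-1}$ and $d_2=(n-1)\binom{n+2}{n-1}+4$ and subtracting $s_1^n=1$ produces the constant $n-5$ together with the claimed binomial expression.

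There is no deep obstacle here: the argument is binomial bookkeeping combined with the boundary bookkeeping at the transition $k=1\to k=2$, where one must use the genuine value $s_1^n=1$ rather than a naive extrapolation of $d_k$. The only remaining task is the closed-form polynomial $\frac{n^4-4n^3+11n^2+16n-72}{12}$ for $\HK_2^n$, which is a direct expansion: writing $\binom{n+2}{n}=\binom{n+2}{2}$, $\binom{n+2}{n-1}=\binom{n+2}{3}$, and $\binom{n+1}{n-1}=\binom{n+1}{2}$ as polynomials in $n$ and simplifying gives the stated quartic, which I would cross-check numerically at a few small values of $n$ as a consistency test.
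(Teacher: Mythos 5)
Your proposal is correct and follows exactly the route the paper intends: the Proposition is stated as an immediate consequence of Theorem \ref{T3}, namely $s_k^n=\dim\E^k-\bigl((n-1)\binom{n+k}{n-1}+k+2\bigr)$ for $k\geq2$ together with $s_0^n=0$, $s_1^n=1$, followed by termwise differencing with Pascal's rule. Your separate treatment of $k=2$ (using the genuine value $s_1^n=1$, reflecting the positive-dimensional stabilizer at the first jet level) and the expansion to the quartic $\frac{n^4-4n^3+11n^2+16n-72}{12}$ are precisely the bookkeeping the paper leaves implicit, and both check out.
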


 \begin{corollary}
For $k\geq3$ the Hilbert function in dimensions $n=3,4,5$ is given by
 \begin{align*}
\HK_k^3 &= k^2+2k-2, \\
\HK_k^4 &=\frac{1}{2} \left( k^3+6k^2+5k-8\right), \\
\HK_k^5 &= \frac{1}{6}\left( k^4+12k^3+35k^2+12k-42\right).
 \end{align*}
For $k=2$ we have $\HK_2^3=4$, $\HK_2^4=14$ and $\HK_2^5= 34$.
 \end{corollary}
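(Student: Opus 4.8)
The plan is to obtain the Corollary directly from the Proposition by specialization. For each fixed $n$ the binomial coefficients occurring in the general formula have a constant lower entry (either $n-1$ or $n-2$) and an upper entry that is affine in $k$; hence each is a polynomial in $k$ whose degree equals that lower entry. Consequently $\HK_k^n$ is, for every fixed $n$ and $k\geq3$, itself a polynomial in $k$, and the entire task reduces to substituting $n\in\{3,4,5\}$, expanding the binomials, and collecting terms.

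First I would treat the case $k\geq3$ for each of $n=3,4,5$ in turn. For $n=3$ one has $\binom{n-1}{2}=1$, $n-1=2$, together with $\binom{k+2}{2}-\binom{k+2}{1}=\tfrac12(k+2)(k+1)-(k+2)$ and $\binom{k+1}{1}=k+1$; inserting these into the Proposition's formula and simplifying gives $k^2+2k-2$. For $n=4$ and $n=5$ the same substitution produces, respectively, a cubic and a quartic in $k$. After writing $\binom{k+3}{3},\binom{k+3}{2},\binom{k+2}{2}$ (for $n=4$) and $\binom{k+4}{4},\binom{k+4}{3},\binom{k+3}{3}$ (for $n=5$) as explicit polynomials and collecting, one recovers the stated expressions $\tfrac12(k^3+6k^2+5k-8)$ and $\tfrac16(k^4+12k^3+35k^2+12k-42)$.

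For the remaining value $k=2$ I would instead use the closed form $\HK_2^n=\tfrac{1}{12}(n^4-4n^3+11n^2+16n-72)$ already recorded in the Proposition and simply evaluate it at $n=3,4,5$, obtaining $48/12=4$, $168/12=14$, and $408/12=34$. Together with the previous paragraph this fills in all six entries of the Corollary.

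The main obstacle, such as it is, is purely computational bookkeeping rather than any genuine mathematical difficulty: one must convert each binomial coefficient carrying the parameter $k$ in its upper entry into the correct polynomial in $k$ and then collect like terms without slips of sign. The most delicate instance is the quartic $n=5$ case, where after scaling by $n-1=4$ and $\binom{n-1}{2}=6$ one combines polynomials of degree up to four and absorbs the additive constant $-1$. A convenient safeguard is to factor out $\tfrac{(k+3)(k+2)}{6}$ before expanding the $n=5$ expression, and to cross-check each final polynomial against the Proposition's general formula at a sample value such as $k=3$.
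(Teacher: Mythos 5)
Your proposal is correct and matches the paper's (implicit) proof exactly: the corollary is a direct specialization of the preceding Proposition, obtained for $k\geq3$ by substituting $n=3,4,5$ into $(n-1)\bigl(\binom{n+k-1}{n-1}-\binom{n+k-1}{n-2}\bigr)+\binom{n-1}{2}\binom{n+k-2}{n-2}-1$ and expanding, and for $k=2$ by evaluating the closed form $\frac{1}{12}(n^4-4n^3+11n^2+16n-72)$. All six values you compute agree with the stated formulas, so nothing further is needed.
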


Similarly define $\tilde{s}_k^n$ and $\HD_k^n=\tilde{s}_k^n-\tilde{s}_{k-1}^n$, $\HD_0^n=\tilde{s}_0^n$ 
for the action of $\Sym$ on $\ED^k$. In the same manner
as Theorem \ref{T3} we conclude:

 \begin{proposition}
The Hilbert function for $\ED/\Sym$ is given by $\HD_0^n=0$, $\HD_1^n=1$,
 \begin{align*}
\HD_2^n &= (n-1) \left( \binom{n+2}{2}-\binom{n+2}{3} \right) +\binom{n-1}{2} \binom{n+1}{2}-3,\\
\HD_k^n &= (n-1) \left( \binom{n+k-1}{n-1} -\binom{n+k-1}{n-2} \right) + \binom{n-1}{2} \binom{n+k-2}{n-2} \\
&-(n-2) \binom{n+k-3}{n-1} -\binom{n+k-4}{n-1} -1
\ \text{ for }k\geq3.
\end{align*}
 \end{proposition}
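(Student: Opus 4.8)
The plan is to reduce the proposition to a purely combinatorial computation, leaning entirely on the orbit-dimension count already supplied by Theorem \ref{T3} and on the dimension formulas for $\ED^k$ recorded above. The essential input is that, by Theorem \ref{T3}, the $\Sym$-action is free on a Zariski-open subset of $\ED^k$ for $k\geq2$, so an orbit in general position in $\ED^k$ has the same dimension $(n-1)\binom{n+k}{n-1}+k+2$ as the corresponding orbit in $\E^k$ (in both cases this dimension equals $\dim\Sym^{(k)}_0$). Consequently $\tilde s_k^n=\dim\ED^k-\bigl((n-1)\binom{n+k}{n-1}+k+2\bigr)$ for $k\geq2$, and each $\HD_k^n$ is then obtained from these codimensions by the prescribed differencing $\HD_k^n=\tilde s_k^n-\tilde s_{k-1}^n$.

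First I would dispose of the low-order cases. Since $\ED^0=\E^0$ and $\Sym$ acts transitively on $J^0\pi$, the generic orbit is dense, giving $\tilde s_0^n=0$ and hence $\HD_0^n=0$. Since $\ED^1=\E^1$, Theorem \ref{T3} gives $\tilde s_1^n=1$, whence $\HD_1^n=1$. For $k=2$ I would substitute the factored value $\dim\ED^2=\bigl(\binom{n+1}{2}+1\bigr)\bigl(\binom{n}{2}+1\bigr)$ into the codimension formula and subtract $\tilde s_1^n=1$; expanding the product and regrouping into binomials reproduces the stated closed form for $\HD_2^n$. As a sanity check, this value differs from $\HK_2^n$ exactly by the codimension $-(n-2)$ of $\ED^2$ in $\E^2$, consistent with $\tilde s_1^n=s_1^n$ and with the equality of orbit dimensions.

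For $k\geq3$ the computation is uniform. Writing $\HD_k^n=(\dim\ED^k-\dim\ED^{k-1})-(\mathrm{orbit}_k-\mathrm{orbit}_{k-1})$, the orbit-dimension difference collapses by Pascal's rule to $(n-1)\binom{n+k-1}{n-2}+1$. For the ambient difference I would insert $\dim\ED^k=\dim\E^k-(n-2)\binom{n+k-2}{n}-\binom{n+k-3}{n}$, use the increment $\dim\E^k-\dim\E^{k-1}=(n-1)\binom{n+k-1}{n-1}+\binom{n-1}{2}\binom{n+k-2}{n-2}$ read off the formula for $\dim\E^k$, and reduce the telescoping differences $\binom{n+k-2}{n}-\binom{n+k-3}{n}=\binom{n+k-3}{n-1}$ and $\binom{n+k-3}{n}-\binom{n+k-4}{n}=\binom{n+k-4}{n-1}$. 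Collecting terms then matches the advertised formula term by term.

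The only genuine subtlety---and the step I would check most carefully---is the boundary at $k=3$, where the difference $\dim\ED^3-\dim\ED^2$ mixes the general $\ED^k$-formula with the special value of $\dim\ED^2$. Here I would verify that the general expression $\dim\E^k-(n-2)\binom{n+k-2}{n}-\binom{n+k-3}{n}$ evaluated at $k=2$ returns precisely $\dim\E^2-(n-2)$, since $\binom{n}{n}=1$ and $\binom{n-1}{n}=0$; this agrees with $\dim\ED^2$ and makes the telescoping valid all the way down to $k=3$ without an extra case. Beyond this bookkeeping the argument is routine, since the analytic content---freeness of the action, Rosenlicht genericity, and persistence of freeness in prolongation---has already been secured in the proof of Theorem \ref{T3}, which the phrase ``in the same manner'' is meant to invoke.
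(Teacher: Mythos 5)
Your proposal is correct and is essentially the paper's own (largely implicit) argument: the paper derives $\HD_k^n$ "in the same manner as Theorem \ref{T3}", i.e.\ by subtracting the generic orbit dimension $(n-1)\binom{n+k}{n-1}+k+2$ from $\dim\ED^k$ to get $\tilde s_k^n$ and then differencing, which is exactly your computation. Your explicit verification that the general formula for $\dim\ED^k$ specializes at $k=2$ to $\dim\E^2-(n-2)$ (so the telescoping is uniform down to $k=3$) is a worthwhile bookkeeping detail the paper leaves unstated, but it does not change the approach.
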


 \begin{corollary}
For $k\geq3$ the Hilbert function in dimensions $n=3,4,5$ is given by
 \begin{align*}
\HD_k^3 &= 4k-3, \\
\HD_k^4 &= \frac{1}{2} (7k^2+5k-8), \\
\HD_k^5 &= \frac{1}{6} (11 k^3+36 k^2+13 k-42).
 \end{align*}
For $k=2$ we have $\HD_2^3=3$, $\HD_2^4=12$ and $\HD_2^5=31$.
 \end{corollary}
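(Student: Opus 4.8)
The plan is to derive these expressions by specializing the general formula for $\HD_k^n$ from the preceding Proposition to $n=3,4,5$; the Corollary is a purely computational consequence and needs no new geometric input. For $k\ge3$ I would start from
\[
\HD_k^n = (n-1)\!\left(\binom{n+k-1}{n-1} - \binom{n+k-1}{n-2}\right) + \binom{n-1}{2}\binom{n+k-2}{n-2} - (n-2)\binom{n+k-3}{n-1} - \binom{n+k-4}{n-1} - 1,
\]
and for $k=2$ from the companion formula $\HD_2^n = (n-1)(\binom{n+2}{2}-\binom{n+2}{3}) + \binom{n-1}{2}\binom{n+1}{2} - 3$. The task then reduces to substituting each value of $n$, expanding the binomial coefficients as polynomials in $k$, and collecting terms.

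Concretely, I would fix $n=3$ first. Here the outer coefficients $(n-1)=2$, $\binom{n-1}{2}=1$, $(n-2)=1$ are constants and the binomials collapse to $\binom{k+2}{2}$, $\binom{k+2}{1}$, $\binom{k+1}{1}$, $\binom{k}{2}$, $\binom{k-1}{2}$. Writing each out and summing, the quadratic contributions cancel and one is left with the linear expression $4k-3$; the cancellation of the $k^2$-terms is the single step worth checking by hand. Repeating the same routine for $n=4$ reduces a cubic combination of binomials to the quadratic $\frac12(7k^2+5k-8)$, and for $n=5$ a quartic combination to the cubic $\frac16(11k^3+36k^2+13k-42)$. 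The three $k=2$ values then follow by evaluating the four binomials in the $\HD_2^n$ formula at $n=3,4,5$, which are concrete integers yielding $3$, $12$, and $31$.

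There is no conceptual obstacle here: the whole argument is bookkeeping. The one point meriting care is confirming that the top-order binomial contributions cancel, so that $\HD_k^n$ has degree $n-2$ in $k$ rather than the naive degree $n-1$ one reads off term by term. Indeed, the only degree-$(n-1)$ contributions come from $(n-1)\binom{n+k-1}{n-1}$, $-(n-2)\binom{n+k-3}{n-1}$ and $-\binom{n+k-4}{n-1}$, each with leading term a multiple of $k^{n-1}/(n-1)!$, and their coefficients sum to $(n-1)-(n-2)-1=0$. This degree drop is precisely what distinguishes the degenerate case from the general one (compare $\HK_k^n$, which has degree $n-1$), and it serves as a convenient consistency check before the lower-order terms are assembled.
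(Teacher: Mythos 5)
Your proposal is correct and matches the paper's (implicit) proof: the corollary is stated without separate argument precisely because it follows by substituting $n=3,4,5$ into the formulas of the preceding Proposition and simplifying, which is exactly what you do, and your expansions (including the leading-term cancellation $(n-1)-(n-2)-1=0$ that drops the degree to $n-2$) check out. The $k=2$ values likewise follow by direct evaluation of the stated $\HD_2^n$ formula, as you indicate.
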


Another way to encode the counting of invariants is through the Poincar\'e function
 $$
P_n(z)=\sum_{k=0}^\infty \HK_k^n z^k.
 $$
Since the Hilbert function is polynomial in $k\ge k_0$, the Poincar\'e function is rational.
 \begin{corollary}
The Poincar\'e function in dimensions $n=3,4,5$ is given by
 \begin{align*}
P_3(z) &= \frac{(1+z+4 z^2-6 z^3+2 z^4) z}{(1-z)^3},\\
P_4(z) &= \frac{(1+10 z-6 z^2-10 z^3+11 z^4-3 z^5) z}{(1-z)^4}, \\
P_5(z) &= \frac{(1+29 z-41 z^2+33 z^4-23 z^5+ 5 z^6) z}{(1-z)^5}
 \end{align*}
for general Kundt spacetimes; for degenerate Kundt spacetimes it is
 \begin{align*}
\tilde{P}_3(z) &= \frac{(1+z+4 z^2-2 z^3) z}{(1-z)^2},\\
\tilde{P}_4(z) &= \frac{(1+9 z+2 z^2-8 z^3+3 z^4) z}{(1-z)^3}, \\
\tilde{P}_5(z) &= \frac{(1+27 z-15 z^2-15 z^3+18 z^4-5 z^5) z}{(1-z)^4}.
 \end{align*}
 \end{corollary}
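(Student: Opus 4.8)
The plan is to compute each Poincar\'e function directly from its definition $P_n(z)=\sum_{k\ge0}\HK_k^n z^k$ (resp. $\tilde P_n(z)=\sum_{k\ge0}\HD_k^n z^k$), using the explicit Hilbert-function data of the two preceding corollaries together with the standard generating-function identity $\sum_{k\ge0}\binom{k+j}{j}z^k=(1-z)^{-(j+1)}$. First I would record the structural input: by those corollaries, for each $n\in\{3,4,5\}$ the value $\HK_k^n$ agrees with a fixed polynomial in $k$ of degree $n-1$ for all $k\ge3$, while $\HK_0^n=0$, $\HK_1^n=1$, and $\HK_2^n$ is the listed number; likewise $\HD_k^n$ is polynomial of degree $n-2$ for $k\ge3$.

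The key observation is that multiplying the series by the appropriate power of $(1-z)$ annihilates the eventual polynomial behaviour. Since the $n$-th finite difference of a polynomial of degree $n-1$ vanishes, the coefficient $[z^k]\bigl((1-z)^n P_n(z)\bigr)=\sum_{j=0}^n(-1)^j\binom{n}{j}\HK_{k-j}^n$ is zero as soon as $\HK_{k},\dots,\HK_{k-n}$ are all given by the degree-$(n-1)$ polynomial, i.e. for $k\ge k_0+n=n+3$. Hence $(1-z)^n P_n(z)$ is a polynomial of degree at most $k_0+n-1=n+2$, which already explains the stated denominators $(1-z)^n$ (and the global factor $z$ coming from $\HK_0^n=0$). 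The identical argument with $n$ replaced by $n-1$ shows that $(1-z)^{n-1}\tilde P_n(z)$ is a polynomial of degree at most $n+1$, matching the denominators $(1-z)^{n-1}$ in the degenerate case. With the denominator pinned down a priori, the numerator is determined by finitely many low-order coefficients. Concretely, I would expand the Hilbert polynomial in the binomial basis $\binom{k+j}{j}$, apply the identity above term by term to get a rational expression, and then patch the coefficients of $z,z^2$ using the actual values $\HK_1^n,\HK_2^n$ (resp. $\HD_1^n,\HD_2^n$), which differ from the polynomial formula that is valid only for $k\ge3$; collecting everything over the common denominator yields the claimed numerators.

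The computation is essentially bookkeeping, so the only genuine care needed is in the low-order correction: because the Hilbert polynomial holds only from $k=3$ onward, the coefficients of $z$ and $z^2$ must be corrected by hand, and an off-by-one in the shift or a sign slip in the finite differences would corrupt the numerator. I would guard against this with an independent consistency check: re-expand each claimed rational function as a power series and confirm that it reproduces $0,1,\HK_2^n,\HK_3^n,\dots$ (resp. the $\HD$ values) exactly through order $n+2$. Since the denominator degree is fixed structurally and the numerator degree is bounded by $n+2$, agreement on these initial coefficients certifies each identity completely.
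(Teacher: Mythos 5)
Your proposal is correct and follows essentially the same route as the paper: the corollary there is obtained exactly by summing the series $P_n(z)=\sum_k \HK_k^n z^k$ (resp.\ $\tilde P_n(z)=\sum_k \HD_k^n z^k$) using the Hilbert-function values from the preceding propositions, with the rationality and the denominator $(1-z)^d$ coming from the eventual polynomiality of the Hilbert function in $k$. Your finite-difference argument pinning down the numerator degree and the coefficient check through order $n+2$ (resp.\ $n+1$) is a sound and complete way to carry out this bookkeeping, and indeed reproduces the stated formulas.
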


\section{Computing the invariants}\label{S4} 

There are several approaches for  describing the algebra of
invariants by generators and syzygies in Lie-Tresse type framework
discussed in Section \ref{S21}. We first give a common scheme,
and then specify it for general and degenerate Kundt spacetimes.
Afterwards we provide an alternative approach with simpler computations
in low dimensions $n=3,4$.

\subsection{The general scheme} \label{S41}

One general approach is to find $n$ \emph{horizontally independent}\footnote{Horizontal independence implies algebraic independence (in jets), i.e. $\op{rank}(\p_{J^{i,j}_\sigma}I_s)=n$, where $J^{i,j}_\sigma$ consists of the base variables $x^i$ and the
jet-variables $y^j_\sigma$. But $n$ invariants can be algebraically independent without being horizontally independent.} rational differential
invariants $I_1,\dots,I_n$, i.e. invariants satisfying
\begin{equation} \label{Independent}
\det[\D_iI_s]\not\equiv0.
\end{equation}
The condition \eqref{Independent} means that for a generic section $\psi\in\Gamma(\pi)$, 
the restriction $\bar{I}_s=(j^\infty\psi)^*I_s$ of the above invariants 
to the holonomic jet-section $j^\infty\psi$ are functionally 
independent; since under this restriction they become functions on $M$
this can be writen as follows: 
 $$
\det[\p_i\bar{I}_s]\not\equiv0.
 $$

Next, derive the corresponding horizontal\footnote{The horizontal
differential is defined by the formula $\hat{d}f|_{j^{k+1} \psi}=d(f\circ j^k \psi)$
$\forall f\in C^\infty(J^k\pi),\ \psi \in\Gamma(\pi)$.} coframe
$\omega^i=\hat{d}I_i$ and its dual horizontal frame $\nabla_i=\D_{I_i}$. This particular set of invariant derivations $\nabla_i$ are called Tresse derivatives, and they are pairwise commuting. When restricted to a generic section of $\pi$, they reduce to partial derivatives with respect to $\bar{I}_i$.
We express $g$ in this frame:
 \begin{equation}\label{gI}
g=G_{ij}\omega^i\omega^j, \qquad G_{ij} = g(\nabla_i,\nabla_j)
 \end{equation}
Now the algebra $\mathcal{A}$ of differential invariants is generated
by $I_i, G_{ij}$ and $\nabla_i$.  

Indeed, in any coordinate system $(x^i)$ the invariants are obtained
from the invariant combinations of the components $g_{ij}$ of the metric, and
their partial derivatives. If we choose invariant
coordinates $\bar{I}_i$, then the metric components and their derivatives
are also invariants. Since no invariants are lost during the change of coordinates, all invariants are obtained as derivatives of the components with respect to $\bar{I}_i$.

Note that the passage $(x^1,\dots,x^n)\mapsto(I_1,\dots,I_n)$
is a differential operator, without differential inverse in general.
Therefore the count of invariants in Section \ref{S3} does not
survive this transformation. 
However, the asymptotics of the Hilbert function\footnote{If
$P(z)=\frac{R(z)}{(1-z)^d}$ is the Poincar\'e function,
with a polynomial $R(z)$ not divisible by $(1-z)$, then the asymptotic
is encoded by the numbers $d$ and $\sigma=R(1)$, see \cite{Kr}.} do
survive. For general metrics the asymptotics are given by $d=n$, $\sigma=\binom{n}2$.
For general Kundt spacetimes $d=n$, $\sigma=n-1$.
For degenerate Kundt spacetimes $d=n-1$, $\sigma=\binom{n}2+1$.
This tells us that, modulo diffeomorphism (coordinate) freedom,
the metrics in the class locally depend on $\sigma$ arbitrary
functions of $d$ variables.

The requirement \eqref{Independent} allows for a wide variety of possibilities when it comes to choosing the $n$ scalar differential invariants $I_1, \cdots, I_n$. For example, for generic Kundt metrics, they can be taken as normalized components\footnote{Beware that normalization can result in elements of an algebraic extension of the field of rational invariants. In particular, invariants obtained in this way may contain roots.}
of the Riemann tensor (cci: Cartan invariants), i.e.\ through
its Ricci or Weyl components, cf.\ \cite{ESEFE}. They can
 also be taken as spi. For instance, following \cite{LY}, choose
 \[
I_1=\op{Tr}(\op{Ric}_g),\ \dots,\ I_n=\op{Tr}(\op{Ric}_g^n).
 \]
There are other possibilities, as we will show in detail for dimensions 3 and 4.  

The differential invariants $G_{ij}$ are rational functions (also when $I_i$ are psi), and so are their Tresse derivatives. Notice however that we have good control of the domain where these rational invariants are defined.  The condition \eqref{Independent} is equivalent to 
\[\hat dI_1 \wedge \cdots  \wedge \hat dI_n \neq 0.\] Assume that the $n$ horizontally independent invariants are of order $k$ or less, and let $\Sigma \subset \E^{k+1}$ (or  $\Sigma \subset \ED^{k+1}$ in the case of degenerate Kundt) denote the set on which $\hat dI_1 \wedge \cdots \wedge \hat dI_n$ vanishes or diverges.  Then $G_{ij}$ are defined on $\E^{k+1}\setminus \Sigma$. Moreover, the derivatives $\nabla_{i_1} \circ \cdots \circ \nabla_{i_r}(G_{ij})$ are defined on $\pi_{k+r+1,k+1}^{-1}(\Sigma) \cap \E^{k+r+1}$, and their restrictions to fibers of $\E^{k+r+1} \to \E^{k+1}$ are polynomials. We refer to \cite{KL2} for more details.  

Another way of finding a generating set of invariants is to construct $n$ independent invariant derivations
$\nabla_1,\dots,\nabla_n$ that are not Tresse derivatives. These form a horizontal frame with dual
horizontal coframe $\omega^1,\dots,\omega^n$, which in turn
determines differential invariants $G_{ij}$ via \eqref{gI}. This lets us again generate $\mathcal{A}$ if we include, in the set of generators, the structure functions $c_{ij}^k$ from $[\nabla_i,\nabla_j] = c_{ij}^k \nabla_k$. The analysis of singular points in the previous paragraph can be adapted to this setting. 


\subsection{Invariants of Kundt spacetimes} \label{S42}

For general Kundt metrics we can, as discussed above, use the Ricci operator $\op{Ric}_g$ and $n$ second-order differential invariants $I_i=\op{Tr}(\op{Ric}_g^i)$, $1\leq i\leq n$. (The restriction of the horizontal tensor field $\op{Ric}_g$ to a Kundt spacetime is an operator 
$TM\to TM$.)
The invariants $I_1,\dots, I_n$ are horizontally independent on a Zariski open set of 3-jets of Kundt metrics,
and thus are sufficient to generate the entire algebra $\mathcal{A}$ of sdi as explained above.

However, for degenerate Kundt spacetimes there are less than $n$
horizontally independent functions among $I_i$. Actually, the Ricci operator
in the $(u,x,v)$ coordinates adapted to Kundt alignment has the form
 $$
\op{Ric}_g=\begin{bmatrix}
\lambda & 0 & 0 \\ * & R_h & 0 \\ * & * & \lambda
\end{bmatrix}
 $$
with $R_h$ being determined by the Ricci operator for the Riemannian
metric $h_{ij}$ on $\bar{M}_u$ and the 2-jet of $W_i$ (more precisely by $(W_i)_v$ and $(W_i)_{x^j v}$) in an invariant manner. When restricted to a degenerate Kundt spacetime, the block-diagonal entries of the operator depend only
on $(u,x)$. Thus, the eigenvalues are $v$-independent functions, and the maximal number of functionally independent eigenvalues is $(n-1)$. For generic degenerate Kundt metrics, this upper bound is reached and the rank of the total Jacobian matrix $[\D_i I_j]$ is equal to $n-1$.

Let $I_1,\dots,I_{n-1}$ be horizontally independent invariants 
chosen from the above set. For degenerate Kundt spacetimes we have $\p_v\bar{I}_i=0$. 
The annihilator of restricted invariants
$d(I_i|_{j^\infty{\psi}})=\hat{d}I_i|_{j^\infty{\psi}}$ integrates to
the foliation $\lambda$ of dimension 1;
here $I_i|_{j^\infty{\psi}}$ is the pullback by the jet-section
$j^\infty{\psi}$ of $J^\infty\pi$ (equivalently: evaluated on the Kundt metric defined by $\psi$),
that we also denoted $\bar{I}_i$, and similar for 1-forms. 


Consider the horizontal covectors $\hat{d}I_1,\dots,\hat{d}I_{n-1}$ and 
$g$-dual horizontal vector fields $\nabla_1,\dots,\nabla_{n-1}$
tangent to $\Lambda$ (we remind that $\Lambda$ is the foliation of codimension 1
with fibers tangent to $\lambda^\perp$). Since the restriction of $g$ to $\Lambda$
is non-negative definite with one-dimensional kernel, we can
without restriction of generality assume that 
the vectors $\nabla_2,\dots,\nabla_{n-1}$ determine a spacelike
subbundle of $\pi_\infty^*TM$ on a Zariski open set in jets.
We claim that the $(n-2)\times(n-2)$ Gram matrix is non-degenerate (and hence positive definite):
 $$
\det[g(\nabla_i,\nabla_j)]_{i,j=2}^{n-1}\not\equiv0.
 $$

Finally, we uniquely determine the last invariant derivation $\nabla_n$
by the conditions
 $$
g(\nabla_1,\nabla_n)=1,\ g(\nabla_i,\nabla_n)=0 \text{ for }1<i\leq n.
 $$
In fact, since restriction of $g$ to the rank two distribution 
$\langle\nabla_2,\dots,\nabla_{n-1}\rangle^\perp$ is Lorentzian,
it has precisely two null-directions at each point. 
One is $\D_v$, and $\nabla_1$ is projected to it along 
$\langle\nabla_2,\dots,\nabla_{n-1}\rangle$ (its projection is also an invariant derivation). The other null-direction
is spanned by $\nabla_n$.

This gives an invariant frame, i.e.\ a basis of sections
of the Cartan distribution $\Cc\simeq\pi_\infty^*TM$,
and the algebra $\mathcal{A}$ is determined by this general scheme.

\subsection{The algebra of differential invariants in low dimensions}

The algorithm considered above provides a complete set of differential
invariants, but the generators have high algebraic complexity.
Therefore, in what follows, we provide an alternative simpler description
of the algebra $\mathcal{A}$ in important dimensions $n=3,4$.

We begin with a general remark. As we saw in Section \ref{S3}, the
action of $\Sym$ is transitive on $J^0 \pi$ and has precisely 1 differential
invariant of order 1 for both $\E$ and $\ED$ in any dimension $n$. We will recycle the notation $I_i$ and $\nabla_i$ from Section \ref{S41} and Section \ref{S42}, and we will continue doing so in Section \ref{S44} to Section \ref{S47}. 

 \begin{proposition}\label{propI1}
Let $w=(W_i)_v dx^i$. The first-order differential invariant is given by 
 \[
I_1= \|{w}\|^2_g= (W_i)_v (W_j)_v h^{ij}.
 \]
 \end{proposition}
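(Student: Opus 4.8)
The plan is to combine the orbit count from Theorem \ref{T3} with an explicit infinitesimal verification. By Theorem \ref{T3}, a generic orbit of $\Sym$ in $\E^1=\ED^1$ has codimension $1$, so the field of first-order rational invariants has transcendence degree $1$; it therefore suffices to exhibit a single nonconstant invariant of order $1$ and to check that the proposed $I_1$ is of this form. First I would record the two transformation laws needed at the level of $1$-jets: the tensorial action on $h_{ij}$ (hence on its inverse $h^{ij}$), and the action on the jet-coordinate $(W_i)_v$.

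The core computation is the prolongation of $\xi\in\sym$ to $\E^1$ restricted to the coordinate $(W_i)_v$. Starting from the coefficient of $\p_{W_i}$ in $\xi^{(0)}$, namely $-(c'W_i+a^j_iW_j+b_i+2a^j_uh_{ij})$, I would apply the standard prolongation formula for the $v$-derivative, $\phi^{W_i}_v=\D_v\phi^{W_i}-(W_i)_v\,\D_v\xi^v$ (the $\D_v$ of the $u$- and $x$-components of $\xi$ vanish, since $c$ and $a^k$ are functions of the base variables only). Because $\xi^v=b-c'v$ gives $\D_v\xi^v=-c'$, the term $+c'(W_i)_v$ produced here cancels the term $-c'(W_i)_v$ coming from $\D_v\phi^{W_i}$, and the contribution $2a^j_u(h_{ij})_v$ drops since $(h_{ij})_v=0$ on $\E^1$. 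What remains is simply
\[
\Ll_{\xi^{(1)}}(W_i)_v=-a^j_i\,(W_j)_v,
\]
so $(W_i)_v$ transforms as the components of a \emph{weight-zero} covector on $\bar M$, i.e.\ of the horizontal $1$-form $w=(W_i)_v\,dx^i$. Combined with $\Ll_{\xi^{(1)}}h^{ij}=a^i_lh^{lj}+a^j_lh^{il}$, a relabelling of the summation indices shows the three terms in $\Ll_{\xi^{(1)}}\bigl((W_i)_v(W_j)_vh^{ij}\bigr)$ cancel in pairs, proving $\Ll_{\xi^{(1)}}I_1=0$ for all $\xi\in\sym$.

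To match this with the stated geometric formula I would finally verify $\|w\|^2_g=(W_i)_v(W_j)_vh^{ij}$ intrinsically. Solving $g(w^\sharp,\cdot)=w$ for the metric dual $w^\sharp=\alpha\p_u+\beta^i\p_{x^i}+\gamma\p_v$ and using $g(\p_v,\p_v)=0$, $g(\p_v,\p_u)\neq0$, $g(\p_v,\p_{x^i})=0$ forces $\alpha=0$ and $\beta^i=h^{ij}(W_j)_v$, whence $\|w\|^2_g=w(w^\sharp)=h^{ij}(W_i)_v(W_j)_v$; the components $\alpha,\gamma$ along the null directions drop out, which is why only the transverse inverse metric $h^{ij}$ appears. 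Nonconstancy is immediate: at the normalized point $p_0$ of Theorem \ref{T3} one has $h^{ij}=\delta^{ij}$ and $I_1=\sum_i(W_i)_v^2$, the invariant already isolated in the proof of Theorem \ref{T3}. I expect the only delicate point to be the weight-zero cancellation in the prolongation of $(W_i)_v$: it is precisely the interplay between the conformal rescaling $v\mapsto v/C'$ and the $c'W_i$ term in the transformation of $W_i$, together with the Kundt condition $(h_{ij})_v=0$, that removes the conformal weight and makes $w$ an honest covector; any bookkeeping error there would spuriously break invariance.
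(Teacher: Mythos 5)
Your computations are all correct as far as they go, and they supply a verification that the paper itself leaves implicit (the paper never proves Proposition \ref{propI1} explicitly; it leans on the stabilizer computation in the proof of Theorem \ref{T3}, where $\sum_i (W_i)_v^2$ appears as an invariant of $\Sym^{(1)}_0$ on the fiber $\E^1_0$, and on the remark in the appendix that $W_v^2$ and $h$ are relative invariants of equal weight, so their ratio is absolute). Your prolongation computation $\Ll_{\xi^{(1)}}(W_i)_v=-a^j_i(W_j)_v$ on $\E^1$, the pairwise cancellation against $\Ll_{\xi^{(1)}}h^{ij}=a^i_lh^{lj}+a^j_lh^{il}$, the identification $\|w\|^2_g=(W_i)_v(W_j)_vh^{ij}$ via the null structure of $g$, and the appeal to Theorem \ref{T3} plus Rosenlicht for the transcendence-degree count are all sound.

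There is, however, one genuine gap, and it is precisely the subtlety the paper highlights for this invariant. You verify only the infinitesimal condition \eqref{LieDI}, i.e.\ invariance under the connected component of $\Sym$ in the smooth topology. The paper warns, immediately after \eqref{LieDI}, that solutions of this system need not be invariant under the entire pseudogroup, and the example given in Section \ref{S44} is exactly $W_v/\sqrt{h}=\sqrt{I_1}$ (in 3D): it satisfies \eqref{LieDI} but changes sign under $(u,x,v,h,W,H)\mapsto(u,-x,v,h,-W,H)$, which lies in $\Sym$. Your argument, as written, cannot distinguish $I_1$ from $\sqrt{I_1}$, so an additional step is required. Either (i) invoke that $I_1$ is rational and that $\Sym$ is Zariski connected (stated after Theorem \ref{T1}): invariance under the smooth-connected component, which is Zariski dense, then forces invariance under all of $\Sym$ since $f\circ\varphi^{(1)}=f$ is a Zariski-closed condition on $\varphi$ for rational $f$; or (ii) check the finite action directly from the lift formulas: on $\E^1$ one gets $(W_i)_v\mapsto\check{A}^j_i(W_j)_v$ (the $\check{C}'$ factors cancel between the transformation of $W_i$ and that of $v$, and the term involving $(h_{kl})_v$ vanishes on the equation), together with $h^{ij}\mapsto A^i_kA^j_lh^{kl}$, whence $I_1\mapsto I_1$ for every $\varphi\in\Sym$, not just for the flows of $\sym$.
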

Here $[h^{ij}]$ is the inverse of the symmetric matrix consisting of fiber coordinates $h_{ij}$.

Since the foliation $\lambda$ is internally invariant, it is reasonable to look for a derivation of the form  
$$
\nabla_1=\gamma \D_v.
$$
For general Kundt spacetimes, we have $\D_v(I_1) \not\equiv 0$, which means that the factor $\gamma$ can be determined by the condition $\nabla_1(I_1)=2$.\footnote{The constant $2$ is a convenient choice with our coordinates. In principle, the right-hand-side can be set equal to any differential invariant.} The invariant $I_1$ determines the invariant derivation $\nabla_2= g^{-1}\hat{d}I_1$ which has, for general Kundt spacetimes, a nonzero $\D_u$-component. 

If $n=3$, we can complete the frame with a derivation $\nabla_3$ which is determined (up to an overall sign) by the equations 
$$
g(\nabla_1,\nabla_3)=0, \qquad g(\nabla_2,\nabla_3)=0, \qquad g(\nabla_3,\nabla_3)=4/I_1.
$$

If $n=4$, we find the third derivation $\nabla_3$ in a different way. Let $\nabla_3= \mathbb{D}^{g}_{\nabla_2} \nabla_1$, where $\mathbb{D}^g$ denotes the covariant derivative with respect to the Levi-Civita connection.
The invariant horizontal frame can now be completed (up to an overall sign) by $\nabla_4$ satisfying 
$$
g(\nabla_1,\nabla_4)=0, \qquad g(\nabla_2,\nabla_4)=0,\qquad g(\nabla_3,\nabla_4)=0, \qquad g(\nabla_4,\nabla_4)=I_1.
$$

In the case of degenerate Kundt spacetimes, the above approach can still be used, but since $\D_v(I_1) \equiv 0$ for degenerate Kundt spacetimes, $I_1$ must be replaced with a different invariant $I$ which satisfies $\D_v(I)\not\equiv 0$. 

In what follows we write down explicitly a basis of invariant derivations for $n=3$ and $n=4$, slightly different than the ones suggested above. By the discussion in Section \ref{S41}, such an invariant horizontal frame solves the equivalence problem. For $n=3$ we also write down explicitly a transcendence basis for the field $\mathcal{A}_2$ of second-order differential invariants. 

We have used the computer algebra system Maple, with the \texttt{DifferentialGeometry} and \texttt{JetCalculus} packages, when computing with invariant derivations and differential invariants. These packages provide an easy way to verify the statements in this section that rely on symbolic computations. 

\subsection{General 3D Kundt spacetimes} \label{S44}

In this and the next subsection (when we consider $n=3$) we simplify the notation: 
$W_1=W, h_{11}=h, x^1=x$.
Then the invariant of Proposition \ref{propI1} is given by
 \begin{equation}\label{I13D}
I_1=\frac{W_v^2}{h}.
 \end{equation}
 Note that the function $W_v/\sqrt{h}$ is invariant with respect to the connected component of $\Sym$ in the smooth topology. However, it changes sign under the transformation 
 \[(u,x,v,h,W,H)\mapsto (u,-x,v,h,-W,H)\] 
 which is contained in its Zariski closure. 

%

The following proposition is easily verified.
 \begin{proposition}
The derivations
\begin{align*}
\nabla_1 &= \frac{W_v}{W_{vv}} \D_v,\qquad 
\nabla_2 = \frac{2}{W_v} \D_x+\frac{h_x W_v-2 h W_{xv}}{h W_v W_{vv}} \D_v, \\
\nabla_3 &=\frac{1}{W_v} \left(H_{vv} \D_x-W_{vv} \D_u+(W_{uv}-H_{xv}) \D_v \right)
\end{align*}
are invariant, and they are independent on a Zariski open subset of $\E^2$. 
 \end{proposition}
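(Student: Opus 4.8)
The plan is to verify the two claims in the Proposition separately: first, that each $\nabla_s$ is a genuine differential invariant (i.e.\ an invariant horizontal vector field), and second, that the three are horizontally independent on a Zariski open subset of $\E^2$. For the invariance, I would compute the lift $\xi^{(2)}$ of a generic $\xi\in\sym$ to $J^2\pi$, using the formula for $\xi^{(0)}$ given in the excerpt together with the standard prolongation recursion for total derivatives, and then check that the Lie bracket of $\xi^{(2)}$ with each coefficient-times-$\D$ expression vanishes in the appropriate sense. In practice it is cleaner to argue that a horizontal field $\nabla=\alpha\,\D_x+\beta\,\D_u+\gamma\,\D_v$ (with $\alpha,\beta,\gamma$ rational functions on $\E^2$) is invariant iff $\Ll_{\xi^{(2)}}\nabla=0$ for all $\xi$, i.e.\ the coefficients transform as a horizontal vector under the lifted pseudogroup. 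Since $\Sym$ is Zariski connected (as noted after Theorem~\ref{T1}), it suffices to check invariance infinitesimally, which reduces to a finite linear computation in the group-parameter jets.

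The conceptual backbone, which makes the computation tractable, is to recognize each building block as manifestly invariant. The combination $W_v/\sqrt{h}$ is invariant under the connected component (as noted right after \eqref{I13D}), so $W_v^2/h=I_1$ is a genuine invariant; more useful here is that the \emph{covector} $w=W_v\,dx$ from Proposition~\ref{propI1} and its exterior derivative are invariantly defined, and the foliation $\lambda=\langle\D_v\rangle$ is internally invariant. I would exhibit $\nabla_1=\tfrac{W_v}{W_{vv}}\D_v$ as the unique (up to normalization) invariant derivation along $\lambda$ normalized by a natural second-order quantity; the ratio $W_v/W_{vv}$ is chosen precisely so that the weight of $W_v$ under the fiber action is cancelled by that of $W_{vv}$. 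The remaining two derivations should be checked to be the Tresse-type completions dual to the invariant coframe, so that their invariance follows from the invariance of $g$ and of $I_1$ together with the scheme of Section~\ref{S42}; alternatively one verifies $\Ll_{\xi^{(2)}}\nabla_s=0$ directly for $s=2,3$.

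For horizontal independence, the criterion is $\det[\D_i I_s]\not\equiv0$ in the sense of \eqref{Independent}; equivalently, writing $\nabla_s=\alpha_s\,\D_x+\beta_s\,\D_u+\gamma_s\,\D_v$, I would show that the $3\times3$ matrix of coefficients $[\alpha_s,\beta_s,\gamma_s]$ is nonsingular on a Zariski open set. Reading off the coefficients: $\nabla_1$ has only a $\D_v$-component, $\nabla_2$ has a nonzero $\D_x$-component $2/W_v$, and $\nabla_3$ has $\D_u$-component $-W_{vv}/W_v$. The determinant of the coefficient matrix is then (up to a rational prefactor) proportional to $(W_v/W_{vv})\cdot(2/W_v)\cdot(-W_{vv}/W_v)=-2/W_v$, which is a nonzero rational function; hence the determinant is not identically zero, and the three derivations are independent away from the algebraic set where $W_v=0$ or $W_{vv}=0$ (together with $h=0$, which is excluded since $h>0$). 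This locus is a proper algebraic subset, so its complement is Zariski open.

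The main obstacle will be the invariance check for $\nabla_2$ and $\nabla_3$: unlike $\nabla_1$, these mix the $\D_x$, $\D_u$, $\D_v$ directions, and their coefficients involve the second-order fiber jets $h_x,W_{xv},H_{vv},W_{uv},H_{xv}$, whose transformation laws under $\xi^{(2)}$ are somewhat involved. The bookkeeping of how $A^i_u$, $B_u$, $C''$ and their derivatives enter the prolonged coefficients is where errors creep in. The honest way to handle this is to let the computer algebra system carry out the prolongation and the bracket, which is exactly what the authors indicate they have done; the phrase \emph{easily verified} signals that once the lift is set up, confirming $\Ll_{\xi^{(2)}}\nabla_s=0$ is a mechanical (if tedious) symbolic computation rather than a subtle argument.
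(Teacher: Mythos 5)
Your proposal is correct and takes essentially the same route as the paper, which offers no argument beyond ``easily verified'' (backed, as the authors note, by Maple computations): namely, a direct check of invariance --- legitimately reduced to the infinitesimal condition $\Ll_{\xi^{(2)}}\nabla_s=0$ because the coefficients are rational and $\Sym$ is Zariski connected --- together with the coefficient-matrix determinant $\pm 2/W_v$, which establishes independence off the proper algebraic set $\{W_vW_{vv}=0\}$. One small caution: your suggested shortcut of identifying $\nabla_2,\nabla_3$ with the Tresse-type completions of Section~\ref{S42} would not work (the paper explicitly says the derivations here are ``slightly different than the ones suggested above''; e.g.\ $g^{-1}\hat dI_1$ has a nonzero $\D_u$-component while the proposition's $\nabla_2$ does not), but since you fall back on the direct verification this does not affect the argument.
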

We have $[\nabla_1,\nabla_2]=-\nabla_2$. The other commutation relations contain nontrivial structure functions (and new invariants), but we omit their explicit form due to their length.

Let $\alpha^j$ denote the elements of the dual horizontal coframe (defined by $\langle \nabla_i,\alpha^j\rangle=\delta_i^j$). The horizontal symmetric 2-form $g$ written in terms of this coframe will have coefficients given by $g(\nabla_i,\nabla_j)$. It takes the form
 \[ 
g = I_1^{-1} \left((J_1 \alpha^3+J_2 \alpha^2-I_1 \alpha^1)\alpha^3  + 4 (\alpha^2)^2\right)
 \]
where
 \begin{align*}
J_1&=\frac{H W_{vv}^2+(- H_{vv} W+ H_{xv}- W_{uv}) W_{vv}+H_{vv}^2 h}{h},\\
J_2&= \frac{4H_{vv} h^2+2(W_{xv}-W W_{vv}) h-W_v h_x}{h^2}.
 \end{align*}

Let us recall from Section \ref{S3} that there are 4 algebraically independent second-order invariants (excluding the one of first order). 

\begin{proposition}\label{th:3Dsecondorder}
The five differential invariants $I_1, J_1,J_2$ and
\begin{align*}
\nabla_3(I_1) &= 2\,\frac{H_{vv} W_{xv}-H_{xv} W_{vv}}{h}-\frac{W_v (H_{vv} h_x-W_{vv} h_u)}{h^2},\\
J_3 &= \frac{W_{vv}^2(h_u^2-2hh_{uu})}{h^3}-\frac{2 W_{vv} (H_v W_{vv}-H_{vv} W_v) h_u}{h^2}\\&-\frac{((H_v W-H_x+W_u) W_{vv}^2-W_v (H_{vv} W-H_{xv}+W_{uv}) W_{vv}+2 H_{vv}^2 h W_v) h_x}{h^3}\\ & +\frac{(-2 H_x W_v+2 H_v W_x+2 H_{xv} W-2 H_{xx}+2 W_{ux}) W_{vv}^2}{h^2} \\
&+\frac{((-2 H_{vv} W+2 H_{xv}-2 W_{uv}) W_{xv}-4 H_{xv} H_{vv} h) W_{vv}+4 H_{vv}^2 h W_{xv}}{h^2}
\end{align*}
constitute a transcendence basis for the field of second-order differential invariants on $\E^2$.
\end{proposition}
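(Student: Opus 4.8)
The plan is to combine the orbit-dimension count of Theorem \ref{T3} with a single algebraic-independence check. Since for $n=3$, $k=2$ a generic orbit of $\Sym$ in $\E^2$ has codimension $s_2^3=5$, Rosenlicht's theorem (as applied in Section \ref{S32} to the algebraic action of $\Sym_0^{(2)}$ on the fiber $\E^2_0$) shows that the field $\mathcal{A}_2$ of second-order rational differential invariants has transcendence degree exactly $5$ over $\R$. Hence it suffices to prove that the five listed functions lie in $\mathcal{A}_2$ and are algebraically independent: any $5$ algebraically independent elements of a field of transcendence degree $5$ automatically constitute a transcendence basis.

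First I would confirm that all five functions are rational invariants of order at most $2$. The function $I_1$ is invariant by Proposition \ref{propI1}. Because $\nabla_1,\nabla_2,\nabla_3$ are invariant derivations and $g$ is an invariant horizontal symmetric $2$-form, every frame component $g(\nabla_i,\nabla_j)$ is a differential invariant; reading off the displayed expression for $g$ shows that $J_1$ and $J_2$ are, up to the invariant factor $I_1$ and a numerical constant, the components $g(\nabla_3,\nabla_3)$ and $g(\nabla_2,\nabla_3)$, hence invariants. Similarly $\nabla_3(I_1)$ is invariant, being an invariant derivation applied to an invariant. For $J_3$ one checks that it, too, is an invariant built from $I_1,J_1,J_2$ and the $\nabla_i$ (equivalently, one verifies $\Ll_{\xi^{(2)}}J_3=0$ directly from \eqref{LieDI}); all five are manifestly rational of order $\le 2$.

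The substantive step is algebraic independence, which I would settle by a triangular-Jacobian argument rather than by a brute-force rank computation. It is enough to pick five jet-coordinates on $\E^2$ against which the Jacobian of $(I_1,J_1,J_2,\nabla_3(I_1),J_3)$ is nonsingular generically, and the choice $(W_v,H,h_x,h_u,h_{uu})$ does this: from the explicit formulas, $I_1$ is free of $H,h_x,h_u,h_{uu}$; $J_1$ is free of $h_x,h_u,h_{uu}$; $J_2$ is free of $h_u,h_{uu}$; $\nabla_3(I_1)$ is free of $h_{uu}$; and $h_{uu}$ occurs only in $J_3$. In this ordering the Jacobian is lower triangular with diagonal entries $\tfrac{2W_v}{h}$, $\tfrac{W_{vv}^2}{h}$, $-\tfrac{W_v}{h^2}$, $\tfrac{W_vW_{vv}}{h^2}$, $-\tfrac{2W_{vv}^2}{h^2}$, whose product is nonzero on the Zariski-open set $\{W_v\neq0,\ W_{vv}\neq0\}$. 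Thus $dI_1\wedge dJ_1\wedge dJ_2\wedge d(\nabla_3 I_1)\wedge dJ_3\neq 0$ on a Zariski-open subset of $\E^2$, so the five invariants are algebraically independent and, by the reduction above, form a transcendence basis for $\mathcal{A}_2$. The only genuine obstacle is confirming this vanishing pattern (and the invariance of $J_3$); both are transparent from the displayed formulas, so no heavy computation is required.
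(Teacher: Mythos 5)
Your proposal is correct, and its logical skeleton is the same as the one the paper relies on: by Theorem \ref{T3} (equivalently the Hilbert function values $\HK_0^3+\HK_1^3+\HK_2^3=0+1+4=5$) together with Rosenlicht's theorem applied to the algebraic action of $\Sym_0^{(2)}$ on $\E^2_0$, the field of second-order rational invariants has transcendence degree $5$, so exhibiting five algebraically independent invariants of order $\le 2$ finishes the proof. Where you genuinely differ is in how the verification is done. The paper delegates both the invariance of $J_3$ and the algebraic independence of the five functions to symbolic computation (Maple, via the \texttt{DifferentialGeometry} and \texttt{JetCalculus} packages) and records no human-checkable argument. Your triangular-Jacobian device --- ordering the invariants against the jet coordinates $(W_v,H,h_x,h_u,h_{uu})$ so that the Jacobian is lower triangular with diagonal entries $\tfrac{2W_v}{h}$, $\tfrac{W_{vv}^2}{h}$, $-\tfrac{W_v}{h^2}$, $\tfrac{W_vW_{vv}}{h^2}$, $-\tfrac{2W_{vv}^2}{h^2}$ --- is verifiable by inspection of the displayed formulas (I checked the vanishing pattern; it is as you claim), and it has the added benefit of exhibiting explicitly the Zariski-open set $\{W_v\neq 0,\ W_{vv}\neq 0\}$ on which independence holds. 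This is a real gain in transparency over a brute-force rank computation.

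One caution concerning the invariance of $J_3$. Your primary suggestion, that $J_3$ is ``built from $I_1,J_1,J_2$ and the $\nabla_i$,'' cannot be taken at face value: $J_3$ is algebraically independent of $I_1,J_1,J_2,\nabla_3(I_1)$ (that is the content of the proposition), and the $\nabla_i$-derivatives of the second-order invariants $J_1,J_2$ are generically of order $3$, so no such representation at order $2$ is available without further argument. Your fallback --- verifying $\Ll_{\xi^{(2)}}J_3=0$ as in \eqref{LieDI} --- is the operative check, but it is not literally ``equivalent'' to $\Sym$-invariance: the paper explicitly warns (Section \ref{S44}, the function $W_v/\sqrt{h}$) that solutions of the Lie equation need not be invariant under the full pseudogroup. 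The gap closes because $J_3$ is \emph{rational} and $\Sym$ is Zariski connected, hence so is the algebraic group $\Sym_0^{(2)}$ acting on the fiber $\E^2_0$, and infinitesimal invariance of a rational function under a Zariski-connected algebraic action does imply invariance. That one-line justification should be added; with it, your proof is complete.
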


Note that $\nabla_2(I_1)=0$ and $\nabla_3(I_1)=2I_1$. By differentiating $J_1,J_2,J_3,\nabla_1(I_1)$ with respect to $\nabla_1,\nabla_2,\nabla_3$, we get 12 differential invariants of order 3, while $\HK_3^3=13$. The differential invariant
\[K_{13}^1-1= -\frac{W_{vvv} W_v}{W_{vv}^2}\] is algebraically independent from the others, and thus completes the transcendence basis for the field of third-order invariants. 

By adding to the 12 third-order invariants the 9 second-order derivatives of $J_1$,$J_2$, $J_3$, $\nabla_1(I_1)$ (36 in total), we get 40 algebraically independent differential invariants of order 4, which generate a transcendence basis for the field of fourth-order differential invariants.

Since $\hat dI_1 \wedge \hat dJ_1 \wedge \hat d J_2 \not\equiv 0$, we can obtain all the structure functions from the commutation relations by differentiating these three invariants. Since the coefficients of the metric in the chosen frame are effectively $I_1, J_1, J_2$, we obtain the following statement. 

\begin{theorem}
For $n=3$ the algebra $\mathcal{A}$ of differential invariants 
of the $\Sym$ action on $\E$ is generated by the differential invariants
$I_1,J_1,J_2$ and the invariant derivations $\nabla_1,\nabla_2,\nabla_3$.
\end{theorem}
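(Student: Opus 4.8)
The plan is to invoke the general generation scheme of Section~\ref{S41}: once we exhibit $n=3$ horizontally independent invariants and compute the metric components $G_{ij}=g(\nabla_i,\nabla_j)$ in the dual coframe, the algebra $\mathcal{A}$ is generated by these invariants, the chosen invariant derivations, and the structure functions $c_{ij}^k$ of their commutators. So the whole statement reduces to checking that the stated data actually realize this scheme, i.e.\ that no separate generators are needed beyond $I_1,J_1,J_2,\nabla_1,\nabla_2,\nabla_3$.

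\medskip

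\noindent First I would verify horizontal independence. The invariants $I_1,J_1,J_2$ are displayed explicitly, and the noted inequality $\hat dI_1\wedge\hat dJ_1\wedge\hat dJ_2\not\equiv0$ means they form a transcendence basis of horizontally independent invariants on a Zariski open set of $\E^2$. This has two consequences that together close the argument. On one hand, by the discussion in Section~\ref{S41}, any three horizontally independent invariants allow one to recover all the structure functions $c_{ij}^k$ by differentiating them along $\nabla_1,\nabla_2,\nabla_3$ and inverting the Jacobian; hence the $c_{ij}^k$ lie in the algebra generated by the stated data and need not be listed separately. On the other hand, the metric components $G_{ij}=g(\nabla_i,\nabla_j)$ are, in the chosen frame, rational expressions in $I_1,J_1,J_2$ (indeed the displayed form of $g$ shows the nontrivial components are built from $I_1,J_1,J_2$), so the $G_{ij}$ are already in the subalgebra generated by $I_1,J_1,J_2$.

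\medskip

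\noindent The core step is then the following. By the general scheme, $\mathcal{A}$ is generated by an invariant frame, the induced metric components $G_{ij}$, and the structure functions — equivalently, every differential invariant is a rational-polynomial function of these and their iterated $\nabla_i$-derivatives. Since $G_{ij}$ and $c_{ij}^k$ are expressible through $I_1,J_1,J_2$ and $\nabla_1,\nabla_2,\nabla_3$, it suffices to show that $I_1,J_1,J_2$ together with the frame recover all of Section~\ref{S42}'s generating data. This is where the Hilbert-function bookkeeping enters: Proposition~\ref{th:3Dsecondorder} shows $I_1,J_1,J_2,\nabla_3(I_1),J_3$ give a transcendence basis at order $2$, matching $\HK_2^3=4$ (plus the order-$1$ invariant), and the counts after that Proposition show that differentiating $J_1,J_2,J_3,\nabla_1(I_1)$ along the frame accounts for $\HK_k^3$ at each order $k$ up to one algebraically independent invariant at order $3$, which is itself a $\nabla$-derivative of the listed generators. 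I would therefore confirm inductively that the number of algebraically independent invariants produced by the frame acting on $\{I_1,J_1,J_2\}$ matches $\HK_k^3$ at every order, so no invariant escapes the generated subalgebra.

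\medskip

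\noindent \textbf{The main obstacle} is the transition from \emph{separating generic orbits} to \emph{generating the whole algebra}: the global Lie--Tresse theorem guarantees that the frame and $G_{ij}$ separate orbits in general position, but one must argue that the remaining invariants are genuinely rational-polynomial combinations of the generators rather than merely agreeing with them on a dense set. Here the rational-polynomial structure established in Section~\ref{S2} (via Zariski connectedness of $\Sym$) and the explicit control of the singular locus $\Sigma\subset\E^3$ from Section~\ref{S41} are essential: away from $\Sigma$ the Tresse-type derivatives of $G_{ij}$ are polynomial along the fibers, and matching the Hilbert counts order by order shows the generated subalgebra already has full transcendence degree at every level, leaving no room for extra generators.
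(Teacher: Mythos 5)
Your first two paragraphs are exactly the paper's proof, and they are correct: the paper observes that $g$ written in the coframe dual to $\nabla_1,\nabla_2,\nabla_3$ has coefficients built rationally from $I_1,J_1,J_2$, and that $\hat dI_1\wedge\hat dJ_1\wedge\hat dJ_2\not\equiv0$ lets one solve the linear system $[\nabla_i,\nabla_j](I_s)=c_{ij}^k\,\nabla_k(I_s)$ for the structure functions, so the second generation scheme of Section \ref{S41} (invariant frame, metric components $G_{ij}$, structure functions $c_{ij}^k$ generate $\mathcal{A}$) immediately yields the theorem. Up to that point you have the same argument as the paper, and it is complete.

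Where your proposal goes astray is in calling the Hilbert-function induction the ``core step.'' Once $G_{ij}$ and $c_{ij}^k$ are known to lie in the subalgebra generated by $I_1,J_1,J_2$ and the frame, there is nothing left to prove; in particular there is no need to ``recover Section \ref{S42}'s generating data'' (that section concerns a different construction, via Ricci traces, which is not used here), and no order-by-order bookkeeping with $\HK_k^3$ is required. Moreover, that bookkeeping could not carry the proof by itself: matching transcendence degrees at every order only shows that each invariant is \emph{algebraic} over the generated subalgebra, not that it is a rational-polynomial function of the generators, which is what generation of $\mathcal{A}$ means here (compare the paper's footnote warning that normalizations may produce elements of an algebraic extension of the field of rational invariants). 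The passage from counting/separation to actual generation is supplied precisely by the frame argument of Section \ref{S41}, which you had already invoked; so your proof stands, but only because its ``setup'' is the whole proof, while the step you flag as essential is both superfluous and, on its own, insufficient.
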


\subsection{Degenerate 3D Kundt spacetimes}
The function $I_1$ of the form \eqref{I13D} is a differential invariant also in the case of degenerate Kundt spacetimes, since $\ED^1=\E^1$ and the Lie pseudogroup action is the same. From Section \ref{S33} we know that there are, in addition, 3 algebraically independent differential invariants of order 2. It is possible to restrict the invariants on $\E^2$ to $\ED^2$, but our transcendence basis on $\E^2$ does not restrict to a transcendence basis on $\ED^2$. 


Let us first define the following:
\[
 I_{2a} = H_{vv}, \quad I_{2b} = \frac{W_v h_x-2h W_{xv}}{h^2}, \quad K_{2a} = \frac{H_{xv}-W_{uv}}{W}, \quad K_{2b}=\frac{W_v h_u-2h W_{uv}}{W h}.
\]
The functions $I_{2a}$ and $I_{2b}$ are second-order differential invariants on $\ED^2$. The functions $K_{2a}$ and $K_{2b}$ are not invariant, but will be convenient for simplifying the formulas in this subsection. For the same reason, we also introduce the (non-invariant) functions
\begin{align*}
Q &= \frac{(2 I_{2a} K_{2b}+I_{2b} K_{2a}-I_{2a} I_{2b}) W}{I_1}, \\
R &= \frac{I_{2b} H W_v^2}{I_1}-\frac{(I_{2b} I_{2a}^2-2 K_{2a} (I_{2b}-2 K_{2b}) I_{2a}+I_{2b} K_{2a}^2) W^2}{4 I_{2a}^2}.
\end{align*} 
A fourth second-order differential invariant is given by 
\begin{align*}
I_{2c}=&\frac{1}{Q^2} \Big(\frac{(I_1^2 h_{u} ( W W_v+h_{u})-(h_{x} (H_v  W-H_x+W_u) I_1^2-W_v^4 I_{2b} H)) I_{2a} I_{2b}}{W_v^2}\\
-&2  \left(W_v H_x+(h_{u}-W_x) H_v+H_{xx}-W_{ux}+h_{uu}\right) I_1 I_{2a} I_{2b} \\
- &W^2 I_1 (K_{2b} (I_{2b}-K_{2b}) I_{2a}-2 I_{2b} K_{2a}^2)
\Big).
\end{align*}

 \begin{proposition}
The differential invariants  $I_1, I_{2a}, I_{2b}, I_{2c}$ constitute a transcendence basis for the field of second-order differential invariants on $\ED^2$. 
 \end{proposition}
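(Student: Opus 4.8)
The plan is to reduce the statement to a single Jacobian rank computation. By Rosenlicht's theorem (invoked in Section~\ref{S32}) together with the count of Section~\ref{S33}, the field of differential invariants of order $\le 2$ on $\ED^2$ has transcendence degree equal to the codimension of a generic orbit, namely $\tilde s_2^3=\HD_0^3+\HD_1^3+\HD_2^3=0+1+3=4$. Since $I_1,I_{2a},I_{2b},I_{2c}$ are four invariants of order $\le 2$ and the transcendence degree is exactly $4$, it suffices to prove that these four functions are algebraically independent: a set of algebraically independent elements whose cardinality equals the transcendence degree is automatically a transcendence basis.

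To establish algebraic independence of these rational functions on the irreducible variety $\ED^2$, I would show that the Jacobian $[\p I_s/\p z_\alpha]$ with respect to the jet-coordinates $z_\alpha$ has rank $4$ on a Zariski open subset. Rather than expand the cumbersome expression for $I_{2c}$, the key is to choose four jet-coordinates adapted to a triangular structure. The decisive observation is that, among the four invariants, only $I_{2c}$ involves the coordinate $h_{uu}$ (the same is true of $H_{xx}$, $W_{ux}$, $H_{xv}$, $W_{uv}$), and that $\p I_{2c}/\p h_{uu}=-2I_1 I_{2a}I_{2b}/Q^2$, a nonzero rational function. This isolates $I_{2c}$.

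I would then complete the coordinate set with $W_v,W_{xv},H_{vv}$ and expand the $4\times4$ Jacobian of $(I_1,I_{2a},I_{2b},I_{2c})$ along the $h_{uu}$-column, whose only nonzero entry is $\p I_{2c}/\p h_{uu}$ since $I_1,I_{2a},I_{2b}$ are independent of $h_{uu}$. The determinant reduces to $\p I_{2c}/\p h_{uu}$ times the $3\times3$ minor built from $I_1,I_{2a},I_{2b}$ in the coordinates $W_v,W_{xv},H_{vv}$,
\[
\det\begin{bmatrix} 2W_v/h & 0 & 0\\[2pt] 0 & 0 & 1\\[2pt] h_x/h^2 & -2/h & 0\end{bmatrix}=\frac{4W_v}{h^2},
\]
so the full Jacobian determinant equals $-8\,W_v^3 H_{vv} I_{2b}/(h^3 Q^2)$. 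This is not identically zero; it is nonvanishing precisely on the Zariski open set where $W_v,H_{vv},I_{2b},Q\neq0$ (and $I_{2c}$ is in any case only defined where $Q\neq0$ and $W_v\neq0$). Hence the four invariants are algebraically independent on a dense open subset of $\ED^2$, which by the dimension count completes the proof.

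The rank computation is short only because of the structural fact that $h_{uu}$ enters none of $I_1,I_{2a},I_{2b}$, which lets me avoid differentiating the full expression for $I_{2c}$. The point that genuinely requires care is therefore not the determinant but the prior verification that $I_{2c}$ is actually a differential invariant, i.e.\ that $\Ll_{\xi^{(2)}}I_{2c}=0$ for every $\xi\in\sym$; this is the computationally heavy step, and the auxiliary functions $Q,R,K_{2a},K_{2b}$ are introduced precisely to keep it tractable and are best checked with the symbolic tools mentioned above. Once invariance is granted, the transcendence-basis property is an immediate consequence of the triangular Jacobian and the count $\tilde s_2^3=4$.
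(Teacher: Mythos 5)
Your proof is correct, and its overall strategy coincides with the one the paper's own argument (implicitly) rests on: by Rosenlicht's theorem (Section \ref{S32}) and the count $\tilde{s}_2^3=\HD_0^3+\HD_1^3+\HD_2^3=4$ (Section \ref{S33}), the field of second-order invariants on $\ED^2$ has transcendence degree $4$, so any four algebraically independent invariants of order $\le 2$ form a transcendence basis. Where you genuinely add value is in the independence check. The paper gives no written verification of this step; it is delegated to Maple, as stated in the remark preceding the 3D subsections. Your observation that $h_{uu}$ occurs in none of $I_1, I_{2a}, I_{2b}$ and enters $I_{2c}$ only linearly, with coefficient $-2I_1I_{2a}I_{2b}/Q^2$ (indeed, $h_{uu}$ appears neither in $Q$, $K_{2a}$, $K_{2b}$ nor in the other terms of $I_{2c}$), triangularizes the $4\times 4$ Jacobian, and your $3\times3$ minor for $(I_1,I_{2a},I_{2b})$ in the coordinates $(W_v,W_{xv},H_{vv})$ is computed correctly, giving $4W_v/h^2$ and hence the generically nonvanishing determinant $\pm 8W_v^3H_{vv}I_{2b}/(h^3Q^2)$ (the sign depends on row/column ordering and is immaterial). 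Since $h_{uu},W_v,W_{xv},H_{vv}$ are genuine coordinates on the irreducible manifold $\ED^2$ (the defining equations $h_v=0$, $W_{vv}=0$ and their consequences involve none of them), the Jacobian criterion in characteristic zero applies, and the argument is complete. The one caveat, which you flag yourself, is that the invariance of $I_{2a},I_{2b},I_{2c}$ under $\sym$ — and, in view of the disconnectedness issue noted in Section \ref{S44}, under the full Zariski-connected pseudogroup $\Sym$ — is presupposed; that verification remains a symbolic computation in your write-up exactly as it is in the paper, so on this point the two arguments are on equal footing.
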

 
 Notice that $\D_v(I_1)=\D_v(I_{2a})=\D_v(I_{2b})=0$ on $\ED^3$. Therefore, $\hat dI_1 \wedge \hat dI_{2a} \wedge \hat d I_{2b}=0$ everywhere. On the other hand, we have $\hat dI_1 \wedge \hat dI_{2a} \wedge \hat d I_{2c}\neq 0$ on a Zariski open set in $\ED^3$. Since $I_1, I_{2a}, I_{2c}$ are horizontally independent, we can write $g$ in terms of them, as explained in Section \ref{S41}, and in this way generate the whole algebra of differential invariants. 
 
Alternatively, we can express the metric in terms of an invariant horizontal frame. 
\begin{proposition}
The derivations 
\begin{align*}
\nabla_1 &= \frac{I_1}{I_{2a} I_{2b}} \cdot \frac{Q}{W_v} \D_v, \qquad \nabla_2 = \frac{1}{W_v} \left( \D_x-\frac{K_{2a}}{I_{2a}} W \D_v \right), \\
\nabla_3 &= \frac{2 I_{2a}}{I_1} \cdot \frac{1}{Q W_v} \left( K_{2b}  W \D_x-I_{2b} h \D_u+ R \D_v \right) 
\end{align*}
are invariant, and they are independent on a Zariski open subset of $\ED^2$. 
\end{proposition}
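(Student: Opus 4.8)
The plan is to treat the two assertions separately: first that each $\nabla_i$ is an invariant derivation, and then that the three are pointwise linearly independent on a Zariski open subset of $\ED^2$.

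For invariance I would use the standard characterization: a horizontal field $\nabla=a\,\D_u+b\,\D_x+c\,\D_v$, with $a,b,c$ rational functions on $\ED^2$, is an invariant derivation exactly when it commutes with the prolongation of every $\xi\in\sym$, i.e.\ $[\xi^{(\infty)},\nabla]=0$. Combining this with the identity $[\xi^{(\infty)},\D_{x^i}]=-(\D_{x^i}\xi^j)\,\D_{x^j}$, where $\xi^j$ denotes the base component of $\xi$ read off from \eqref{fg}, the condition reduces to the linear system
$$\xi^{(\infty)}(a^i)=a^j\,\D_{x^j}\xi^i,\qquad i\in\{u,x,v\},$$
with $\xi^u=c(u)$, $\xi^x=a(u,x)$, $\xi^v=b(u,x)-c'(u)v$, so the right-hand sides are fully explicit (for instance $\D_u\xi^u=c'$ and $\D_v\xi^v=-c'$, while the remaining base derivatives are $a_u,a_x,b_u-c''v,b_x$). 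Since $\sym$ is parametrized by the arbitrary functions $c,a,b$, it suffices to impose these equations on a spanning family of generators obtained by fixing the relevant Taylor data of $c,a,b$, which makes the check finite.

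To run the check I would prolong the lift $\xi^{(0)}$ recorded before \eqref{LieDI} to the jet variables entering the coefficients—namely $W_v,W_{vv},W_{xv},W_{uv},H_{vv},h,h_x,h_u$ together with the second jets appearing in $I_1,I_{2a},I_{2b},Q,R,K_{2a},K_{2b}$—and substitute. The most transparent bookkeeping is by transformation weight under $\Sym$: since $\D_v\mapsto C'\,\D_v$ while $\D_x,\D_u$ acquire the inverse-Jacobian factors $\check{A}$ and $\check{C}'$, the claim is that the scalar prefactors $\tfrac{I_1Q}{I_{2a}I_{2b}W_v}$, $\tfrac1{W_v}$, $\tfrac{2I_{2a}}{I_1QW_v}$ and the inner coefficients $\tfrac{K_{2a}}{I_{2a}}W$, $K_{2b}W$, $I_{2b}h$, $R$ carry precisely the compensating weights, so that each $\nabla_i$ is fixed and the non-invariant auxiliaries $Q,R,K_{2a},K_{2b}$ enter only through invariant combinations. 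This is the bulk of the argument and the main obstacle: it is routine but heavy symbolically—chiefly because $Q$ and $R$ are unwieldy—and is precisely the kind of computation the authors confirm in Maple.

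For independence I would exploit the triangular shape of the coefficient matrix in the frame $(\D_u,\D_x,\D_v)$: $\nabla_1$ has only a $\D_v$-component, $\nabla_2$ has no $\D_u$-component, and $\nabla_3$ is the only derivation with a nonzero $\D_u$-component, namely $-\tfrac{2I_{2a}I_{2b}h}{I_1QW_v}$. Hence the determinant equals, up to sign, the product of the $\D_u$-coefficient of $\nabla_3$, the $\D_x$-coefficient $\tfrac1{W_v}$ of $\nabla_2$, and the $\D_v$-coefficient of $\nabla_1$, which is a nonzero rational function on $\ED^2$. Its locus of regularity and non-vanishing—where $W_v,I_1,I_{2a},I_{2b},Q$ are all finite and nonzero—is the asserted Zariski open subset, and its non-emptiness is verified by evaluating at a single convenient point.
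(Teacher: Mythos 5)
Your strategy is essentially the paper's own: the authors give no written proof of this proposition but state that the claims of this section are verified symbolically in Maple, and your plan reduces invariance to precisely that kind of finite symbolic check (the invariance conditions are linear in the jets of $c,a,b$, so finitely many generators of $\sym$ suffice). Your independence argument is correct and is worth making explicit: with rows $\nabla_1,\nabla_2,\nabla_3$ and columns $\D_u,\D_x,\D_v$ the coefficient matrix is anti-triangular, so the determinant collapses to the product you describe, which simplifies to $2h/W_v^3$ up to sign -- manifestly nonzero wherever the coefficients are defined (note that definedness also forces $W\neq 0$, which enters through $K_{2a},K_{2b}$ and hence $Q,R$; this is implicit in your ``finite'' proviso but should be listed).

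One step needs justification, and it is exactly the pitfall the authors flag in this same section. Your opening claim -- that a horizontal field with rational coefficients is an invariant derivation \emph{exactly when} $[\xi^{(\infty)},\nabla]=0$ for all $\xi\in\sym$ -- is not the naive ``standard characterization'': vanishing of all Lie brackets only gives invariance under the smooth identity component of $\Sym$, whereas $\Sym$ has four components, and the paper's example $W_v/\sqrt{h}$ satisfies all the Lie equations \eqref{LieDI} yet is not $\Sym$-invariant. The equivalence you invoke is true here, but only because your coefficients are rational and $\Sym$ is Zariski connected (it is the Zariski closure of its identity component, as noted after Theorem \ref{T1}): the set of transformation jets preserving a rational $\nabla_i$ is Zariski closed and contains the identity component, hence is all of $\Sym$. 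Either add that one-line argument, or observe that your alternative bookkeeping by weights under the \emph{finite} transformations of the lift covers all four components directly and therefore does not need it; as written, the proposal silently uses the implication that the paper warns can fail.
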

Notice that $\nabla_1$ and $\nabla_2$ can be simplified by multiplying by invariant functions. We have kept these factors because the metric has simple coefficients when expressed in terms of this horizontal frame. If we denote by $\alpha^1, \alpha^2,\alpha^3$ the horizontal coframe dual to the horizontal frame $\nabla_1, \nabla_2, \nabla_3$, we have
\[g=I_1^{-1} \left( (-2 \alpha^1+2 \alpha^2+\alpha^3) \alpha^3+(\alpha^2)^2\right) .\]
It follows that the algebra of differential invariants is generated by $I_1, \nabla_1, \nabla_2, \nabla_3$ and the structure functions in the commutation relations. Since $\hat dI_1 \wedge \hat dI_{2a} \wedge \hat d I_{2c} \not\equiv 0$, the structure functions can be recovered by applying $\nabla_1,\nabla_2,\nabla_3$ to $I_1, I_{2a}, I_{2c}$.

\begin{theorem}
For $n=3$ the algebra $\mathcal{A}$ of differential invariants 
of the $\Sym$ action on $\ED$ is generated by the differential invariants
$I_1,I_{2a}, I_{2c}$ and the invariant derivations $\nabla_1,\nabla_2,\nabla_3$.
\end{theorem}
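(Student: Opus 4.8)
The plan is to follow the general scheme from Section \ref{S41} verbatim, using the horizontally independent invariants $I_1, I_{2a}, I_{2c}$ to witness the hypothesis $\hat dI_1 \wedge \hat dI_{2a} \wedge \hat dI_{2c} \not\equiv 0$ on a Zariski open subset of $\ED^3$, which was already asserted in the preceding paragraph. The key point established there is that the coefficients of $g$ in the invariant frame $\nabla_1, \nabla_2, \nabla_3$ are particularly simple: the only genuinely varying coefficient is $I_1$ (appearing as the overall factor $I_1^{-1}$), while the remaining entries are the constants $-2, 2, 1$. So the claim to verify is that everything needed to reconstruct the full algebra $\mathcal{A}$ is already expressible in terms of $I_1$, the three derivations $\nabla_i$, and the structure functions $c_{ij}^k$.

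First I would invoke the general scheme of Section \ref{S41} in its second incarnation, where one starts from $n$ independent invariant derivations $\nabla_1, \nabla_2, \nabla_3$ (not Tresse derivatives) with dual horizontal coframe $\alpha^1, \alpha^2, \alpha^3$. That scheme tells us that $\mathcal{A}$ is generated by the $\nabla_i$, the components $G_{ij} = g(\nabla_i, \nabla_j)$, and the structure functions $c_{ij}^k$. From the displayed formula $g = I_1^{-1}\bigl((-2\alpha^1 + 2\alpha^2 + \alpha^3)\alpha^3 + (\alpha^2)^2\bigr)$, every $G_{ij}$ is a constant multiple of $I_1^{-1}$, so the metric coefficients contribute no new generators beyond the single invariant $I_1$. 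Thus it remains only to verify that the structure functions $c_{ij}^k$ lie in the algebra generated by $I_1$ and the $\nabla_i$.

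Next I would establish the recoverability of the $c_{ij}^k$. Since $I_1, I_{2a}, I_{2c}$ are horizontally independent, the three Tresse-type coframe elements $\hat dI_1, \hat dI_{2a}, \hat dI_{2c}$ span the horizontal cotangent space on the relevant Zariski open set, so applying $\nabla_1, \nabla_2, \nabla_3$ to the triple $(I_1, I_{2a}, I_{2c})$ produces a $3\times 3$ matrix of differential invariants with nonvanishing determinant. Expressing $\hat dI_{2a}$ and $\hat dI_{2c}$ back in the $\alpha^j$ basis via this invertible matrix, one recovers $\nabla_i$ applied to any invariant, and in particular $[\nabla_i, \nabla_j]$ acting on these three functionally independent invariants determines the $c_{ij}^k$ by solving the linear system $[\nabla_i, \nabla_j](I_\bullet) = c_{ij}^k \nabla_k(I_\bullet)$. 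Hence each $c_{ij}^k$ is a rational expression in invariants already generated by $I_1$ and the $\nabla_i$, completing the argument.

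The main obstacle is not conceptual but one of rigor regarding the domain and the separation of orbits: one must confirm that the horizontal independence $\hat dI_1 \wedge \hat dI_{2a} \wedge \hat dI_{2c} \not\equiv 0$ genuinely holds on a Zariski open set in $\ED^3$ (and persists in higher prolongations, so that no new singularities appear past some jet-level, as guaranteed by the global Lie-Tresse theorem of \cite{KL2}), and that the resulting generators separate generic $\Sym$-orbits there. The potentially delicate part is checking that the chosen invariant frame remains nondegenerate and that $I_{2c}$ — built from the auxiliary non-invariant quantities $Q, R, K_{2a}, K_{2b}$ — is a bona fide invariant on $\ED^3$ with $\D_v(I_{2c}) \not\equiv 0$; this is precisely the kind of symbolic verification the authors note was carried out in Maple. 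Once these technical confirmations are in place, the generation statement follows formally from Section \ref{S41} exactly as in the general $3$D Kundt case.
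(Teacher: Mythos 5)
Your proposal is correct and takes essentially the same route as the paper: invoke the frame version of the general scheme from Section \ref{S41}, observe that the coefficients $g(\nabla_i,\nabla_j)$ are constant multiples of $I_1^{-1}$, and recover the structure functions $c_{ij}^k$ from the horizontal independence $\hat dI_1\wedge\hat dI_{2a}\wedge\hat dI_{2c}\not\equiv0$ by applying $\nabla_1,\nabla_2,\nabla_3$ to $I_1,I_{2a},I_{2c}$. The only difference is that you spell out the linear-system step for solving $[\nabla_i,\nabla_j](I_\bullet)=c_{ij}^k\nabla_k(I_\bullet)$, which the paper states in a single sentence.
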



\begin{remark}
Here, we have written a frame of invariant derivations with coefficients in $\ED^2$. Allowing coefficients in $\ED^k$ for higher $k$, may allow for invariant derivations in more compact form, such as
\[\frac{(W H_{vv}-H_{xv}+W_{uv})H_{xvv}-2hH_{vv} H_{uvv}}{h} \D_v.\]
\end{remark}

\subsection{General 4D Kundt spacetimes}
For general four-dimensional Kundt spacetimes, the invariant of Proposition \ref{propI1} is given by
 \begin{equation}\label{I14D}
I_1=\frac{(W_1)_v^2 h_{22} -2 (W_1)_v (W_2)_v h_{12} + (W_2)_v^2 h_{11}}{h_{11} h_{22}-h_{12}^2}.
 \end{equation}
Let us introduce the notation 
\[ A=(W_1)_v h_{22}-(W_2)_v h_{12}, \qquad B= (W_1)_v h_{12}-(W_2)_v h_{11}, \]
and 
\begin{multline*}
T =
\frac { \splitfrac{A^3(h_{11})_{x^1}-((h_{11})_{x^2}+2(h_{12})_{x^1})A^2B+((h_{22})_{x^1}+2(h_{12})_{x^2})AB^2-B^3(h_{22})_{x^2}}{
-2(h_{11}h_{22}-h_{12}^2)(A^2(W_1)_{x^1 v}+B^2(W_2)_{x^2 v}-AB((W_1)_{x^2 v}+(W_2)_{x^1 v}))
}}
{2 (h_{11} h_{22}-h_{12}^2) (A (W_1)_{vv}-B (W_2)_{vv})}.
\end{multline*}

We have the following proposition. 
\begin{proposition}
The derivations
\begin{gather*}
\nabla_1= \frac{(W_1)_v^2 h_{22} -2 (W_1)_v (W_2)_v h_{12} + (W_2)_v^2 h_{11}}{A (W_1)_{vv}-B (W_2)_{vv}} \D_v,  \\
\nabla_2 = \frac{(W_2)_{vv} \D_{x^1}-(W_1)_{vv} \D_{x^2}+((W_1)_{x^2v}-(W_2)_{x^1v}) \D_v}{(W_1)_v (W_2)_{vv}-(W_2)_v (W_1)_{vv}}, \\
\nabla_3 = \frac{A \D_{x^1} - B \D_{x^2}  + T \D_v}{h_{11} h_{22}-h_{12}^2} , \qquad \nabla_4= g^{-1} \hat dI_1 
\end{gather*}
are invariant, and they are independent on a Zariski open subset of $\E^2$.
\end{proposition}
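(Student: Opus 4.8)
The plan is to establish invariance of each of the four derivations separately, then verify independence on a Zariski open subset of $\E^2$. The key observation is that each $\nabla_i$ is built from $\Sym$-invariant data, so the cleanest route is to use the infinitesimal criterion: a horizontal vector field of the form $\nabla = a^1\D_{x^1}+a^2\D_{x^2}+a^3\D_v$ with coefficients $a^i$ on $\E^k$ is an invariant derivation precisely when it commutes with the prolonged action, equivalently $[\xi^{(k)},\nabla]$ is proportional to $\nabla$ itself (in fact vanishes as a total-derivative operator) for every $\xi\in\sym$. Since $\sym$ is generated by the vector fields \eqref{fg}, this reduces to checking a finite set of conditions obtained from the explicit prolongation of $\xi^{(0)}$ to the relevant jet order. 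For $\nabla_4 = g^{-1}\hat dI_1$ invariance is automatic: $I_1$ is an invariant by Proposition \ref{propI1}, $\hat d$ commutes with the lifted action, and $g$ is invariant by construction of the lift, so $g^{-1}\hat dI_1$ is manifestly invariant.

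First I would treat $\nabla_1$. Its coefficient is $I_1$ divided by $A(W_1)_{vv}-B(W_2)_{vv}$, and the point is that both $I_1$ and the denominator transform as relative invariants with matching weights, so the quotient times $\D_v$ is genuinely invariant; the factor $\D_v$ is natural because the foliation $\lambda = \R\cdot\ell = \R\cdot\p_v$ is internally invariant, as already used in Section \ref{S44}. Next I would handle $\nabla_2$ and $\nabla_3$, which are the more intricate ones: here the numerators $(W_2)_{vv}\D_{x^1}-(W_1)_{vv}\D_{x^2}+\cdots$ and $A\D_{x^1}-B\D_{x^2}+T\D_v$ are arranged so that the $x$-components transform by the cofactor of the Jacobian $[A^i_{x^j}]$, while the $\D_v$-corrections (involving $(W_i)_{x^jv}$ and the function $T$) are precisely what is needed to cancel the inhomogeneous terms coming from the $\p_v$-part of $\xi^{(0)}$. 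The role of $T$ is to make $\nabla_3$ horizontal with respect to the invariant structure — essentially $T$ is forced by requiring $\nabla_3$ to be $\Sym$-invariant, and I expect verifying the cancellation in the $\D_v$-component of $[\xi^{(2)},\nabla_3]$ to be the main computational obstacle, since $T$ is the most complicated expression and its transformation behavior must be checked against the lifted action on all second-order jet variables.

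For independence, I would compute the determinant of the $4\times 4$ matrix of coefficients of $\nabla_1,\dots,\nabla_4$ with respect to the total-derivative basis $\D_u,\D_{x^1},\D_{x^2},\D_v$ and show it is not identically zero. Since $\nabla_1$ and $\nabla_4$ have nontrivial $\D_v$ and (via $g^{-1}$) $\D_u$ components while $\nabla_2,\nabla_3$ are supported on $\D_{x^1},\D_{x^2},\D_v$, the determinant factors through the $2\times 2$ minor $\det\begin{bmatrix}(W_2)_{vv}&-(W_1)_{vv}\\ A&-B\end{bmatrix} = B(W_1)_{vv}-A(W_2)_{vv}$, which is exactly the (sign-reversed) denominator appearing in $\nabla_1$; this quantity is a nonzero rational function on $\E^2$, vanishing only on a proper algebraic subset. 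Combined with $I_1\not\equiv 0$ and the generic nondegeneracy of $g$ restricted to the appropriate distribution, this gives nonvanishing of the full determinant on a Zariski open set. The bulk of the work is the symbolic verification of invariance, which — as the paper notes — can be checked directly in Maple using the \texttt{DifferentialGeometry} and \texttt{JetCalculus} packages, so I would state the invariance as a direct computation and reserve the explicit argument for the independence determinant.
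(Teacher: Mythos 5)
Your route is essentially the paper's own: the paper prints no argument for this proposition, but states (just before Section \ref{S44}) that such claims are verified symbolically in Maple, and the analogous 3D proposition is simply labelled ``easily verified''. Your plan --- check the infinitesimal condition $[\xi^{(2)},\nabla_i]=0$ against the generators \eqref{fg}, observe that $\nabla_4=g^{-1}\hat{d}I_1$ is manifestly invariant, and prove independence via the coefficient determinant --- is that same verification, with the independence step usefully made explicit.

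Three local points need repair, though none is fatal to the approach. (i) Your evaluation of the $2\times2$ minor is incorrect: with rows $\bigl((W_2)_{vv},-(W_1)_{vv}\bigr)$ and $(A,-B)$ the determinant is $A(W_1)_{vv}-B(W_2)_{vv}$, i.e.\ exactly the denominator of $\nabla_1$; your expression $B(W_1)_{vv}-A(W_2)_{vv}$ transposes the indices, and there is no sign reversal. The correct value makes the argument cleaner: the numerator of $\nabla_1$'s coefficient is $I_1\,(h_{11}h_{22}-h_{12}^2)$ (not $I_1$), and the $\D_u$-component of $\nabla_4$ equals $g^{uv}\D_v(I_1)$, a nonzero multiple of $\D_v(I_1)$, since $g^{uu}=g^{ux^i}=0$ for the shape \eqref{Kundt}; hence the full $4\times4$ determinant collapses, after cancellation of the factors $A(W_1)_{vv}-B(W_2)_{vv}$ and $h_{11}h_{22}-h_{12}^2$, to a nonzero multiple of $I_1\,\D_v(I_1)\big/\bigl((W_1)_v(W_2)_{vv}-(W_2)_v(W_1)_{vv}\bigr)$, which is a nonzero rational function on $\E^2$ because $I_1\not\equiv0$ and $\D_v(I_1)\not\equiv0$ there (the latter vanishes identically only on $\ED$). (ii) The weight heuristic for $\nabla_1$ is misstated: $I_1$ is an \emph{absolute} invariant (Proposition \ref{propI1}), so its weight is trivial and cannot ``match'' that of the denominator; what must hold is that $A(W_1)_{vv}-B(W_2)_{vv}$ is a relative invariant whose weight differs from that of the numerator exactly by the scaling factor of $\D_v$ under \eqref{fG}. (iii) Your ``precisely when'' glosses over the issue the paper itself flags after \eqref{LieDI}: solutions of the Lie equations need not be invariant under all of $\Sym$, which has four smooth connected components (cf.\ the $W_v/\sqrt{h}$ example in Section \ref{S44}). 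The infinitesimal check suffices here only because the coefficients of your derivations are rational and $\Sym$ is Zariski connected; this half-sentence should be added to close the argument.
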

Notice that $\nabla_4$ is the only derivation among these that have a non-zero $\D_u$-component. 
We have $g (\nabla_1,\nabla_i)=0$ for $i=1,2,3$, and 
\[ g(\nabla_2,\nabla_2)=I_{2a}, \; g(\nabla_2,\nabla_3)=1, \; g(\nabla_3,\nabla_3) = I_1, \; g(\nabla_1,\nabla_4)=2I_1, \; g(\nabla_3,\nabla_4)=0. \] 
Here 
\[I_{2a}=\frac{(W_1)_{vv}^2 h_{22}-2 (W_1)_{vv} (W_2)_{vv} h_{12} + (W_2)_{vv}^2 h_{11}}{((W_1)_v (W_2)_{vv}-(W_2)_v (W_1)_{vv})^2}\] is one of the second-order differential invariants. The formulas for $g(\nabla_2, \nabla_4)$ and $g(\nabla_4,\nabla_4)$ are more complicated. 

There are $\HK_2^4=14$ algebraically independent differential invariants of order 2, so we will not attempt to write down all of them. Instead we will be satisfied with finding four horizontally independent differential invariants. The scalar curvature $S_h$ of the ($u$-parametrized) metric $h$ is an invariant function depending only on $h_{ij}$ and their $x^i$-derivatives up to second order. A fourth differential invariant is given by 
\[I_{2b}= \frac{\big(((W_2)_{uv}-H_{x^2v}) (W_1)_{vv}-((W_1)_{uv}-H_{x^1v}) (W_2)_{vv}+((W_1)_{x^2v}-(W_2)_{x^1v}) H_{vv}\big)^2 }{h_{11} h_{22}-h_{12}^2}.\]

\begin{theorem}
The four differential invariants $I_1, I_{2a},I_{2b},S_h$ are horizontally independent on a Zariski open subset in $\E^3$, and thus sufficient for solving the equivalence problem.  
\end{theorem}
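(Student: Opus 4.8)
The plan is to establish horizontal independence of the four invariants $I_1, I_{2a}, I_{2b}, S_h$ by verifying that the $4\times 4$ Jacobian matrix $[\D_i(I_s)]$ has generic rank $4$, which by the criterion \eqref{Independent} is equivalent to $\hat dI_1 \wedge \hat dI_{2a} \wedge \hat dI_{2b} \wedge \hat dS_h \not\equiv 0$ on $\E^3$. Once this is shown, the conclusion that the four invariants solve the equivalence problem follows immediately from the general scheme of Section \ref{S41}: horizontally independent invariants $I_1,\dots,I_n$ serve as invariant coordinates $\bar I_i$, the metric components $G_{ij}=g(\nabla_i,\nabla_j)$ in the dual Tresse frame become invariants, and the algebra $\mathcal A$ is then generated. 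So the entire content of the theorem reduces to the non-vanishing of one wedge product on a Zariski open set.

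First I would exploit the block structure already visible in the computation. The invariants naturally separate by which jet-variables they involve. The scalar curvature $S_h$ depends only on $h_{ij}$ and their $x^i$-derivatives (a purely ``transverse'' quantity on $\bar M_u$), while $I_1$ is first-order in $W$ and $I_{2a}$ is built from $(W_i)_{vv}$ and $(W_i)_v$, and $I_{2b}$ brings in the mixed second derivatives of $W$ and $H$ together with $H_{vv}$. This suggests organizing the Jacobian so that its dependence on the four directions $\D_u, \D_{x^1}, \D_{x^2}, \D_v$ exhibits a triangular or near-triangular pattern, from which a nonzero determinant can be read off. In particular, note that $S_h$ has no $v$-dependence and no dependence on $H$ or $W$, so its $\D_v$-derivative vanishes and it contributes in the $\D_u,\D_{x^1},\D_{x^2}$ block; conversely $I_{2b}$ is the only invariant here carrying $H_{x^iv}$ and $H_{vv}$ in an essential way, which should decouple its contribution.

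Since constructing and expanding the full $4\times 4$ Jacobian symbolically is unwieldy, the practical route is to exhibit a single explicit jet $p \in \E^3$ (a specific choice of values for $h_{ij}, W_i, H$ and their derivatives up to order three) at which $\det[\D_i(I_s)](p) \neq 0$; the non-vanishing at one rational point of $\E^3$ immediately forces non-vanishing on a Zariski open set, because the determinant is a rational function on $\E^3$ and its polynomial numerator is nonzero as soon as it is nonzero at a point. A convenient test jet would take $h_{ij}=\delta_{ij}$ at the base point but with prescribed nonzero second $x$-derivatives chosen to make $S_h \neq 0$ with controlled gradient, together with generic small values of $(W_i)_v, (W_i)_{vv}, (W_i)_{x^jv}, H_{vv}, H_{x^iv}$. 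This is exactly the kind of verification the authors note they carried out in Maple, so I would present the structural argument and then cite the symbolic check.

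The main obstacle is the algebraic complexity of the total derivatives: applying $\D_u, \D_{x^1}, \D_{x^2}, \D_v$ to $I_{2a}$ and especially to $I_{2b}$ and $S_h$ produces lengthy rational expressions in third-order jets, so a direct hand computation of the determinant is infeasible. The key to making this tractable by hand is the decoupling observed above---identifying one distinguished jet-variable on which each invariant's $\D$-derivative depends nontrivially and on which the others do not---so that after a suitable reordering of rows and columns the Jacobian becomes block-triangular and its determinant factors into manageable pieces. Establishing that such a triangular structure genuinely holds (rather than merely generically), and confirming that each diagonal block is generically non-singular, is the crux; once that pattern is secured the non-vanishing on a Zariski open subset of $\E^3$ follows, and with it the equivalence-problem claim via Section \ref{S41}.
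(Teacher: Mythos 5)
Your proposal is correct and follows essentially the same route as the paper, which states this theorem without a written proof and relies on the Maple symbolic verification (announced at the end of the preceding subsection) of the horizontal independence $\hat dI_1\wedge\hat dI_{2a}\wedge\hat dI_{2b}\wedge\hat dS_h\not\equiv0$ on $\E^3$, combined with the general scheme of Section \ref{S41} by which horizontally independent invariants yield Tresse derivatives, the coefficients $G_{ij}=g(\nabla_i,\nabla_j)$, and hence a solution of the equivalence problem. Your additional observations---that $\D_v S_h=0$ on $\E$, and that nonvanishing of the rational Jacobian determinant at a single point of the irreducible variety $\E^3$ forces nonvanishing on a Zariski open subset---are valid and simply make explicit what the symbolic check establishes.
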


\subsection{Degenerate 4D Kundt spacetimes} \label{S47}
The first-order invariant $I_1$ is the same as in the previous section. In total, there are $\HD_2^4=12$ algebraically independent invariants of second order. We write down two of them:
\begin{gather*}
I_{2a} = H_{vv}, \qquad I_{2b}=\frac{((W_1)_{x^2v}-(W_2)_{x^1v})^2}{h_{11} h_{22}-h_{12}^2}.
\end{gather*}

Let us find an invariant horizontal frame. The horizontal 1-forms $\hat dI_1, \hat d I_{2a}, \hat d I_{2b}$ are independent: $\hat dI_1 \wedge \hat d I_{2a} \wedge \hat d I_{2b} \not\equiv 0$. Since $\D_v(I_1)=\D_v(I_{2a})=\D_v(I_{2b})=0$, the 1-forms have no $dv$-component. By solving the equations 
\[(\hat dI_1+a_1 \hat d I_{2a} + a_2 \hat d I_{2b}) (\D_{x^1})=0, \qquad (\hat dI_1+a_1 \hat d I_{2a} + a_2 \hat d I_{2b})(\D_{x^2})=0\]
for $a_1$ and $a_2$, we obtain an invariant 1-form which is proportional to $du$. We turn it into a horizontal vector field by using $g$, and denote the resulting invariant derivation, which is proportional to $\D_v$, by $\nabla_1$. Next, we define 
\[\nabla_2 = g^{-1} \hat d I_{2a}, \qquad \nabla_3 = g^{-1} \hat d I_{2b}. \] 
We complete the invariant horizontal frame by requiring $\nabla_4$ to satisfy 
\[ g(\nabla_1,\nabla_4)=1, \qquad g(\nabla_2,\nabla_4)=0, \qquad g(\nabla_3,\nabla_4)=0,\qquad  g(\nabla_4,\nabla_4)=0.\]

\begin{proposition}
The derivations $\nabla_1,\nabla_2,\nabla_3,\nabla_4$ are invariant, and independent on a Zariski open subset of $\ED^3$. 
\end{proposition}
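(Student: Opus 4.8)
The plan is to show that each $\nabla_i$ is singled out by conditions phrased entirely in terms of objects already known to be $\Sym$-invariant (the invariants $I_1,I_{2a},I_{2b}$, the horizontal metric $g$, and the invariant line $\langle du\rangle$), so that invariance is automatic once uniqueness is established; and then to prove independence by comparing the $\D_u,\D_{x^1},\D_{x^2},\D_v$-components. First I would record the invariant building blocks. Since $I_1,I_{2a},I_{2b}$ are differential invariants and $\hat{d}$ commutes with the prolonged $\Sym$-action, the one-forms $\hat{d}I_1,\hat{d}I_{2a},\hat{d}I_{2b}$ are invariant; by hypothesis they are independent on a Zariski open set of $\ED^3$, and because $\D_v(I_1)=\D_v(I_{2a})=\D_v(I_{2b})=0$ there, they have no $dv$-component and hence span the rank-three codistribution $\langle du,dx^1,dx^2\rangle$. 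The horizontal metric $g$ is invariant by construction of the lift, so $g^{-1}$ carries invariant one-forms to invariant derivations; thus $\nabla_2=g^{-1}\hat{d}I_{2a}$ and $\nabla_3=g^{-1}\hat{d}I_{2b}$ are invariant.

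The one subtlety for $\nabla_1$ is the solve-for-$a_1,a_2$ step. The key observation is that the line $\langle du\rangle$ is $\Sym$-invariant, since under $u\mapsto C(u)$ one has $du\mapsto C'(u)\,du$. Hence the element of the invariant codistribution $\langle\hat{d}I_1,\hat{d}I_{2a},\hat{d}I_{2b}\rangle$ proportional to $du$ is invariantly characterized up to scale; normalizing the coefficient of $\hat{d}I_1$ to $1$ pins it down as $\omega=\hat{d}I_1+a_1\hat{d}I_{2a}+a_2\hat{d}I_{2b}$. Comparing with $\varphi^*\omega=\hat{d}I_1+(\varphi^*a_1)\hat{d}I_{2a}+(\varphi^*a_2)\hat{d}I_{2b}$, which is again proportional to $du$ with $\hat{d}I_1$-coefficient $1$, uniqueness of such a representative forces $\varphi^*a_i=a_i$, so $\omega$ is invariant. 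Since $g(\D_v,\cdot)$ is a multiple of $du$ (equivalently $g^{u x^i}=g^{uu}=0$ in the Kundt inverse metric), we get $\nabla_1=g^{-1}\omega\propto\D_v$, which is therefore invariant.

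Finally $\nabla_4$ is cut out by $g(\nabla_1,\nabla_4)=1$, $g(\nabla_2,\nabla_4)=g(\nabla_3,\nabla_4)=0$, $g(\nabla_4,\nabla_4)=0$, all expressed through invariant data, so it suffices to show this system has a unique solution. Here I would argue exactly as in the general scheme of Section \ref{S42}: the three linear conditions place $\nabla_4$ in the $g$-orthogonal complement of the plane $\langle\nabla_2,\nabla_3\rangle$; on a Zariski open set this plane is spacelike, so the complement is a Lorentzian plane with exactly two null lines, one spanned by $\D_v\parallel\nabla_1$. As $g(\nabla_1,\nabla_1)=0$, the inhomogeneous condition $g(\nabla_1,\nabla_4)=1$ excludes the $\D_v$-line and selects the second null direction, with the normalization fixing its scale. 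This yields existence and uniqueness, hence invariance. For independence I compare components: $\nabla_1\in\langle\D_v\rangle$, while $g^{-1}$ sends $\langle du,dx^1,dx^2\rangle$ into $\langle\D_{x^1},\D_{x^2},\D_v\rangle$ with $\D_{x^j}$-parts governed by the Riemannian $h^{ij}$, so $\nabla_2,\nabla_3$ lie in $\langle\D_{x^1},\D_{x^2},\D_v\rangle$ and have independent $\D_{x^i}$-parts wherever the $dx^i$-components of $\hat{d}I_{2a},\hat{d}I_{2b}$ are independent; thus $\nabla_1,\nabla_2,\nabla_3$ are independent. The relation $g(\nabla_1,\nabla_4)=1$ with $\nabla_1\propto\D_v$ forces a nonzero $\D_u$-component in $\nabla_4$, placing it outside $\langle\D_{x^1},\D_{x^2},\D_v\rangle$ and completing independence.

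The main obstacle is the spacelike/non-degeneracy step underlying the construction of $\nabla_4$: one must certify that $\langle\nabla_2,\nabla_3\rangle$ remains spacelike (its Gram determinant a nonzero invariant) and that all the determinants appearing in the independence argument are simultaneously nonzero on a common Zariski open set, i.e.\ that this set is nonempty. I would settle this either by evaluating the relevant determinants at a single explicit jet in $\ED^3$ and invoking the persistence-of-freeness argument of Section \ref{S32} and \cite{OP}, or by a direct computation of the Gram determinant in Maple, as used elsewhere in this section.
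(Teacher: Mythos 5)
Your proposal is correct, and it follows the paper's own construction of $\nabla_1,\dots,\nabla_4$ step by step; the difference is in how the verification is carried out. The paper gives no written proof for this proposition: it relies on the stated independence $\hat dI_1 \wedge \hat d I_{2a} \wedge \hat d I_{2b} \not\equiv 0$ and on symbolic verification in Maple (\texttt{DifferentialGeometry}/\texttt{JetCalculus}) for the claims in this section. You instead supply structural arguments that the paper leaves implicit: invariance of $\nabla_2,\nabla_3$ because $g^{-1}$ maps invariant horizontal $1$-forms to invariant derivations; invariance of $\nabla_1$ via the $\Sym$-invariance of the line $\langle du\rangle$ together with uniqueness of the normalized representative $\omega=\hat dI_1+a_1\hat dI_{2a}+a_2\hat dI_{2b}$ (which correctly forces $\varphi^*a_i=a_i$); and existence, uniqueness, and hence invariance of $\nabla_4$ from the Lorentzian-plane/two-null-lines argument, exactly as in the general scheme of Section \ref{S42}. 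Your independence argument via the $\D_u,\D_{x^i},\D_v$-component structure is also sound. What your route buys is that the only thing left to computation is nonemptiness of the relevant Zariski open set, i.e.\ that the solvability condition for $a_1,a_2$ (equivalently, independence of the $dx^i$-parts of $\hat dI_{2a},\hat dI_{2b}$, equivalently nondegeneracy of the Gram matrix of $\langle\nabla_2,\nabla_3\rangle$) is not identically violated on $\ED^3$; you correctly isolate this as the residual obstacle and propose settling it by evaluation at an explicit jet or by Maple, which is precisely how the paper handles it. One small point worth making explicit: this solvability condition is strictly stronger than the independence of the three $1$-forms alone (e.g.\ it fails if $\hat dI_{2a}$ were proportional to $du$), so it does require its own check; both the definition of $\nabla_1$ and the spacelike step for $\nabla_4$ hinge on it.
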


We have $\D_v(g(\nabla_i,\nabla_j))\equiv 0$ for every $i$ and $j$. We choose an invariant for which this is not the case from the commutation relations $[\nabla_i,\nabla_j]=c_{ij}^k \nabla_k$. For instance, we have $\D_v(c_{23}^1) \not\equiv 0$.

\begin{theorem}
The differential invariants $I_1, I_{2a}, I_{2b}, c_{23}^1$ are horizontally independent, and thus sufficient for solving the equivalence problem.  
\end{theorem}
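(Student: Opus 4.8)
The plan is to verify horizontal independence of $I_1, I_{2a}, I_{2b}, c_{23}^1$ on a Zariski open subset of $\ED^4$, and then invoke the general scheme of Section \ref{S41} to conclude that these four invariants solve the equivalence problem. Since we are in dimension $n=4$, horizontal independence means $\hat dI_1 \wedge \hat dI_{2a} \wedge \hat dI_{2b} \wedge \hat dc_{23}^1 \not\equiv 0$, equivalently $\det[\D_i I_s]\neq 0$ where $\D_i$ ranges over the total derivatives in the four base directions $u, x^1, x^2, v$. The key structural observation, already noted in the excerpt, is that $\D_v(I_1)=\D_v(I_{2a})=\D_v(I_{2b})=0$ on $\ED^\infty$, since all three are $v$-independent when restricted to a degenerate Kundt section. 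Consequently $\hat dI_1, \hat dI_{2a}, \hat dI_{2b}$ have no $dv$-component, and the obstruction to horizontal independence lies precisely in whether a fourth invariant with nonzero $\D_v$-derivative is available.

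The crucial input is the stated fact that $\D_v(c_{23}^1)\not\equiv 0$. First I would establish this by computing the commutation relation $[\nabla_2,\nabla_3]=c_{23}^k\nabla_k$ explicitly for the invariant frame of the preceding proposition and extracting $c_{23}^1$; here one uses that $\D_v(g(\nabla_i,\nabla_j))\equiv 0$ (as the excerpt records) so that the $v$-dependence of $c_{23}^1$ comes genuinely from the frame vector fields and not from the Gram coefficients. Granting $\D_v(c_{23}^1)\not\equiv 0$, the determinant $\det[\D_i I_s]$ factors (after expansion along the $\D_v$-row) so that the only term surviving is $\D_v(c_{23}^1)$ times the $3\times 3$ minor built from $\D_j I_s$ for $j\in\{u,x^1,x^2\}$ and $s\in\{1,2a,2b\}$. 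That minor is nonzero precisely because $\hat dI_1 \wedge \hat dI_{2a} \wedge \hat dI_{2b} \not\equiv 0$ (established in the preceding proposition), and since these three forms already have no $dv$-component, their wedge is detected exactly by this minor. Thus the full $4\times 4$ determinant is a nonzero product on a Zariski open set.

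Once horizontal independence is in hand, the conclusion follows from the general scheme: by the argument of Section \ref{S41}, any four horizontally independent rational differential invariants furnish Tresse derivatives $\nabla_s = \D_{I_s}$, in which the metric components $G_{ij}=g(\nabla_i,\nabla_j)$ and their iterated $\nabla$-derivatives are again invariants generating $\mathcal{A}$; equivalently, by the signature-variety criterion of Section \ref{S21} (see \cite{C,O}), horizontal independence of a set of invariants of bounded order is exactly what is needed to separate generic orbits and thereby solve the equivalence problem for generic degenerate Kundt spacetimes.

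I expect the main obstacle to be the verification that $\D_v(c_{23}^1)\not\equiv 0$, since this is where the genuinely $v$-dependent information must enter, and it is precisely the degeneracy conditions $(W_i)_{vv}=0$, $H_{vvv}=0$ that collapse the three lower-order invariants into a $v$-independent family, forcing one to look at structure functions of the frame to recover a transverse direction. This requires carrying out the Lie-bracket computation for the frame $\nabla_1,\dots,\nabla_4$ of the previous proposition and confirming that the coefficient $c_{23}^1$ retains explicit dependence on jet-variables such as $H_{vvv}$-analogues (e.g. $H_{x^iv}$ or third-order mixed jets) that are not constrained to vanish; this is a finite but genuinely involved symbolic computation, which in the paper is confirmed with the computer algebra tools mentioned at the end of Section \ref{S42}.
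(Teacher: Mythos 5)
Your proposal follows essentially the same route as the paper: the paper's argument (given in the text preceding the theorem) is exactly that $\hat dI_1\wedge\hat dI_{2a}\wedge\hat dI_{2b}\not\equiv 0$ with all three forms lacking a $dv$-component since $\D_v(I_1)=\D_v(I_{2a})=\D_v(I_{2b})=0$, and that $\D_v(c_{23}^1)\not\equiv 0$ (verified symbolically in Maple), which together yield horizontal independence and then sufficiency via the general scheme of Section \ref{S41}. Your explicit expansion of $\det[\D_i I_s]$ along the $\D_v$-row simply makes the paper's implicit linear-algebra step visible, and your identification of the symbolic verification of $\D_v(c_{23}^1)\not\equiv 0$ as the computational core matches the paper's reliance on computer algebra.
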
 



\section{Conclusion}

We considered the equivalence problem
for general and degenerate Kundt metrics with respect to the 
action of the pseudogroup of local diffeomorphisms. 
Denoting these classes of spacetimes by $\mathcal{K}$ and $\tilde{\mathcal{K}}$, respectively, 
we have (many other important subclasses are omitted):
 $$
\mathcal{K}\supset\tilde{\mathcal{K}}\supset\op{VSI}\supset\text{Kundt
waves}.
 $$
For general Kundt metrics the problem can be solved using
scalar polynomial invariants, but even then it is a nontrivial 
task to specify the required invariants, cf.\ \cite{ZM}.
For degenerate Kundt metrics, the spi are insufficient for separating metrics.

We use instead rational differential invariants, which separate jets of metrics in
general position within the class of degenerate Kundt metrics. By integrating the foliations $(\lambda,\Lambda)$ internal to the
class of Kundt metrics, one can normalize the set of admissible coordinates
and reduce the pseudogroup $\op{Diff}_{\text{loc}}(M)$ to $\Sym$ consisting of transformations that preserve the form of Kundt metrics expressed in terms of admissible coordinates.
The equivalence classes of Kundt metrics $\mathcal{K}$ 
(respectively degenerate Kundt metrics $\tilde{\mathcal{K}}$) 
with respect to all transformations are in bijective correspondence to those of
form \eqref{Kundt} with respect to the shape-preserving transformations:
 $$
\mathcal{K}/\op{Diff}_{\text{loc}}(M)=\E/\Sym\quad \text{ and }\quad
\tilde{\mathcal{K}}/\op{Diff}_{\text{loc}}(M)=\ED/\Sym.
 $$
In order to be consistent, in these equalities we should interpret $\mathcal{K}$ and $\tilde{\mathcal K}$ to mean the corresponding spaces of jets of metrics. The algebras of differential invariants 
consist of functions on those spaces.

Since our invariants are rational functions in jet-variables of low order and polynomial in higher jet-variables, there is a Zariski closed subset of jets of (degenerate) Kundt spacetimes that are not separated by the invariants we have found. By restricting to this Zariski closed set, and considering the Lie pseudogroup action on this set, it is possible to repeat the procedure and find an algebra of rational invariants separating generic jets of metrics in this singular set, etc. 

One should note that the coordinates used to create 
the signature variety $\Psi(X)$ need not be adapted to
$(\lambda,\Lambda)$. For instance, none of the invariants $I_i$ constructed
in Section \ref{S41} were required to be constant along $\Lambda$.
This however does not obstruct to solve the equivalence problem:
the foliation $\lambda$ is reconstructed from the first $(n-1)$ 
differential invariants and since the metric $g$ is determined, 
$\Lambda=\lambda^\perp$ is recovered. 
In principle, the Cartan invariants can be used for the same purposes,
yet with the formalism for differential invariants we have a better
control over the analytic properties of the functions in the algebra $\mathcal{A}$ of differential invariants.

\medskip

Several classes of transformations were considered in the literature
that are natural subgroups of $\Sym$. Reference \cite{AA} studied nil-Killing
fields defined as those vector fields $X$ on $M$ that are aligned
with respect to $\lambda$ and $\Ll_Xg$ is nilpotent wrt the filtration 
$(\lambda,\Lambda)$. It was shown in \cite{MA} that nil-Killing 
vector fields wrt $\lambda$, preserving $\lambda$, form a Lie algebra:
 $$
\g_\lambda=\{X\,:\,\Ll_X\lambda=\lambda\text{ and }
\Ll_Xg\text{ is of type III wrt }\lambda\}. 
 $$
This is an infinite-dimensional Lie subalgebra of $\sym$ given by \eqref{fg}.
The corresponding Lie pseudogroup of $\lambda$-aligned
transformations, preserving spi, depends on 1 function of $(n-1)$ arguments
(and other functions of fewer arguments), cf.\ \cite[Proposition 6]{AA}.
In fact, this pseudogroup consists of transformations \eqref{fG}
forming $\Sym$ such that the induced transformation of $(\bar{M}_u,h)$
is a $u$-parametric isometry.

A proper subalgebra of $\g_\lambda$ is the Lie algebra of Kerr-Schild 
vector fields wrt $\lambda$, defined as those $X$, preserving $\lambda$,
for which $\Ll_Xg\in S^2\lambda^*$ (has type N), see \cite{CHS}. 
This Lie algebra may be trivial, however if the 1-form $w=(W_i)_vdx^i$ 
on $\Lambda^*$ (important in our computations of 
invariants, see Proposition \ref{propI1}) is exact, then any infinitesimal
transformation $b(u)\p_v$ is a Kerr-Schild vector field, so this
algebra may also be infinite-dimensional. 

The equivalence problem of classes of spacetimes wrt to those and other 
Lie sub-pseudogroups may be of interest in its own right.

\subsection*{Acknowledgements} E. Schneider acknowledges full support via the Czech Science Foundation (GA\v{C}R no. 19-14466Y). This work was also partially supported by the project Pure Mathematics in Norway, funded by Trond Mohn Foundation and Tromsø Research Foundation.

\appendix
\section{Relative differential invariants}\label{ApA}

Here we demonstrate that the class of degenerate Kundt spacetimes is singled out
among all Kundt metrics by a relative invariant condition, so that the singular behavior
can be observed by studying orbits of the diffeomorphism pseudogroup on the jets of metrics.
Note that due to normalization of the vector $\ell$ as in \eqref{Kundt} the diffeomorphism pseudogroup
shrinks to the pseudogroup of shape-preserving transformations.

A function $f\in C^\infty(J^k\pi)$ is a relative differential invariant wrt a pseudogroup $\Sym$ if
$\varphi^*f=s_\varphi\cdot f$ $\forall\varphi\in\Sym$ for some nonzero function $s_\varphi$ on $J^k \pi$.
For the corresponding Lie algebra $\sym$ this translates into:
 $$
\Ll_{\xi^{(k)}}f=\omega(\xi)f\quad \forall\xi\in\sym
 $$
for some $\omega\in\sym^*\otimes C^\infty(J^k\pi)$. This $\omega$ is a 1-cocycle,
i.e.\ it satisfies the equation (cf.\ \cite{O})
 $$
\Ll_{\xi^{(k)}}\omega(\eta)-\Ll_{\eta^{(k)}}\omega(\xi)-\omega([\xi,\eta])=0\quad \forall \xi,\eta\in\sym.
 $$
Cocycles of the type $\omega=dh$ are called trivial. Cocycles modulo trivial ones are called
cohomology; in arbitrary order they form the group $H^1(\sym,C^\infty(J^\infty\pi))$.
To catch the algebraic structure of the jet-fibers we consider only such
cocycles that (modulo trivial) are polynomial in the jet-variables.
Such relative invariants are not plentiful.

Of course, absolute differential invariants are relative.
For any relative differential invariant $f$ the equation given by $f=0$ is $\Sym$-invariant.
If the invariant is genuinely relative (not absolute), then $\{f=0\}$ contains singular orbits.
Recall that an orbit is regular if a neighborhood of it is fibred by orbits, and it is called
singular otherwise.

Let us focus on the 3D case (as before we omit indices for $W_i$ and $h_{ij}$ here).
Since the action of $\Sym$ is transitive on $J^0\pi$,
as in Section \ref{S32}, we can translate any point to
 $$
p_0=\{u=0,x=0,v=0,h=1,H=0,W=0\}.
 $$


Consider the action of $\Sym_0^{(1)}$ on the fiber $\pi_{1,0}^{-1}(p_0) \cap \E_1$. Here $W_v^2$ is an absolute invariant, with the action transitive on its level sets\footnote{Note that $W_v^2$ is an absolute invariant only with respect to the stabilizer subgroup of the point $p_0$.
If we restore the entire group action, then both $W_v^2$ and $h$ are
relative invariants of the same weight, so that their ratio $I_1$ is an
absolute invariant.}.  As shown in Section \ref{S32}, we can bring any point to the point (omitting equations of $p_0$)
 $$
p_1=\{h_u=0,h_x=0,H_u=0,H_x=0,H_v=0,W_u=0,W_x=0,W_v=c\}.
 $$
Here $c \geq 0$ is the level parameter, and $h_u,h_x,\dots$ are jet-variables. Note that the first seven of these are normalized by translations, after which we are left with the action of $O(n-2)=O(1)= \mathbb Z_2$ on $W_v$.


Next consider the action of the stabilizer pseudogroup $\Sym_1^{(2)}$ on
2-jets $\E^2_1=\E^2\cap\pi_{2,1}^{-1}(p_1)$. This space has dimension 15,
while the group acting on it has dimension 11. Thus we get 4 absolute
invariants, as established in Section \ref{S33} and explicitly given in Section \ref{S44}.
To get more precise structure of the orbit space note that the
group consists of 9 translations and 2 affine transformations.
The translations form a 9-dimensional Abelian group $\mathfrak{A}$ with the
Lie algebra
 $$
\p_{h_{xx}},\ \p_{h_{ux}},\ \p_{h_{uu}}+\p_{W_{ux}},\ \p_{H_{xx}}+\p_{W_{ux}},\
\p_{H_{ux}},\ \p_{H_{uu}},\ \p_{H_{uv}},\ \p_{W_{xx}},\ \p_{W_{uu}}.
 $$
We use them to set $h_{xx}=h_{ux}=h_{uu}=H_{xx}=H_{ux}=H_{uu}=H_{uv}=W_{xx}=W_{uu}=0$.
This global transversal to the action of $\mathfrak{A}$
can be identified with the quotient space $Q^6=\E^2_1/\mathfrak{A}$.
Let us introduce the coordinates
$z_1=-\frac12 W_{ux}$, $z_2=-W_{uv}$, $z_3=W_{xv}$,
$z_4=2W_{vv}$, $z_5=\frac13 W_{uv}-\frac23 H_{xv}$, $z_6=\frac23 W_{xv}+\frac43 H_{vv}$ on $Q$. Then
the infinitesimal affine transformations are
 $$
V_1=2z_1\p_{z_1}+z_2\p_{z_2}-z_4\p_{z_4}+z_5\p_{z_5},\quad
V_2= z_2\p_{z_1}+z_3\p_{z_2}+z_4\p_{z_3}+(z_3-z_6)\p_{z_5}.
 $$
They form a 2-dimensional solvable Lie algebra with $[V_1,V_2]=-V_2$.
Since  $V_2$ is nilpotent, any polynomial relative invariant must belong to its kernel. The linear polynomials in the kernel are spanned by $z_4$ and $z_6$. Here $z_6$ is an absolute invariant, while $z_4$ has weight $-1$ with respect to $V_1$. The relative invariant $z_4$ gives the first condition for degenerate Kundt spacetimes: $W_{vv}=0$. 

By extending this analysis to $\E^3$, we see that the function $H_{vvv}$ is not a relative invariant, but becomes so when we restrict to the subset in $\E^3$ given by $W_{vv}=0$ and its differential consequences. This (conditional) relative invariant also has weight $-1$ with respect to (the prolongation of) $V_1$. 

Note that there exist other nontrivial relative invariants, of higher degree. The invariant $W_{vv}$ on $\E^2$ is singled out by having negative weight; all other relative invariants with negative weight have $W_{vv}$ as a factor. The invariant $H_{vvv}$ on the sub-PDE given by $W_{vv}=0$ is determined uniquely in the same way. Thus the degenerate Kundt conditions $W_{vv}=0, H_{vvv}=0$ arise from investigations of singularities of the $\Sym$ action on $\E^2$ and $\E^3$.

 \begin{theorem}
The function $W_{vv}$ is a relative invariant of the $\Sym$ action on $\E^2$. On the submanifold in $\E^3$ given by $W_{vv}=0$ and its differential consequences, the function $H_{vvv}$ is a relative invariant of the $\Sym$ action. 
 \end{theorem}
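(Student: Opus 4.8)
The plan is to verify both statements infinitesimally, using the criterion recorded at the start of this appendix: a function $f$ on $J^k\pi$ is a relative invariant of $\Sym$ precisely when $\Ll_{\xi^{(k)}}f=\omega(\xi)\,f$ for all $\xi\in\sym$, with $\omega(\xi)$ depending linearly on $\xi$ and its jets. Since the explicit lift $\xi^{(0)}$ to $J^0\pi$ is already available from Section \ref{S2}, in the $3$D notation (with $c=c(u)$, $a=a(u,x)$, $b=b(u,x)$) it reads
\[
\xi^{(0)}=c\,\p_u+a\,\p_x+(b-c'v)\p_v-2a_xh\,\p_h-(c'W+a_xW+b_x+2a_uh)\p_W-(2c'H-c''v+b_u+a_uW)\p_H,
\]
and it remains only to prolong this field and read off the coefficients of $\p_{W_{vv}}$ and $\p_{H_{vvv}}$. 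I would do this with the recursive prolongation formula
\[
\Ll_{\xi^{(k+1)}}y_{\sigma v}=\D_v\bigl(\Ll_{\xi^{(k)}}y_\sigma\bigr)-(\D_v\xi^u)\,y_{\sigma u}-(\D_v\xi^x)\,y_{\sigma x}-(\D_v\xi^v)\,y_{\sigma v},
\]
in which, among the three correction terms, only $\D_v\xi^v=-c'$ is nonzero, because $c,a,b$ are $v$-independent.

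For the first claim I restrict to $\E$, where $\D_v h=0$, and iterate along $v$. A short computation then gives $\Ll_{\xi^{(1)}}W_v=-a_xW_v$ (the $-2a_uh_v$ contribution drops by the $\E$-constraint, and the $c'$ contributions cancel), and hence
\[
\Ll_{\xi^{(2)}}W_{vv}=\D_v(-a_xW_v)+c'W_{vv}=(c'-a_x)\,W_{vv}.
\]
Thus $W_{vv}$ is a relative invariant on $\E^2$ of weight $\omega(\xi)=c'-a_x$; in particular $\{W_{vv}=0\}$ is $\Sym$-invariant.

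For the second claim I iterate the same procedure on $H$. Using $\D_v(a_uW)=a_uW_v$ together with the cancellations above, I expect
\[
\Ll_{\xi^{(1)}}H_v=-c'H_v+c''-a_uW_v,\qquad \Ll_{\xi^{(2)}}H_{vv}=-a_uW_{vv},
\]
and therefore
\[
\Ll_{\xi^{(3)}}H_{vvv}=\D_v(-a_uW_{vv})+c'H_{vvv}=c'H_{vvv}-a_uW_{vvv}.
\]
On all of $\E^3$ this is \emph{not} proportional to $H_{vvv}$, owing to the term $-a_uW_{vvv}$; this is exactly why $H_{vvv}$ is only a conditional relative invariant. Restricting to the submanifold cut out by $W_{vv}=0$ together with its differential consequence $W_{vvv}=\D_vW_{vv}=0$ — which is $\Sym$-invariant by the first claim, so that $\xi^{(3)}$ is tangent to it — the offending term vanishes and $\Ll_{\xi^{(3)}}H_{vvv}=c'H_{vvv}$, giving a relative invariant of weight $c'$.

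The whole argument is essentially mechanical once the lift is in hand; the only genuine point is the final step, namely recognizing that the non-multiplicative term $-a_uW_{vvv}$ is annihilated precisely upon passing to the invariant locus $\{W_{vv}=0\}$ and its prolongation, so the restriction — rather than any clever rewriting — is what turns $H_{vvv}$ into a relative invariant. The cocycle property of the weights $c'-a_x$ and $c'$ needs no separate verification, since both arise from an honest pseudogroup action.
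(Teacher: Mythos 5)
Your proof is correct, but it takes a genuinely different route from the paper's. You verify the relative-invariance identity directly: starting from the explicit lift $\xi^{(0)}$ and the prolongation recursion in which only $\D_v\xi^v=-c'$ survives, you obtain $\Ll_{\xi^{(2)}}W_{vv}=(c'-a_x)W_{vv}$ on $\E^2$ (the terms $-2a_uh_{v}$, $-2a_uh_{vv}$ dying on the Kundt equation) and $\Ll_{\xi^{(3)}}H_{vvv}=c'H_{vvv}-a_uW_{vvv}$, with the obstruction term annihilated precisely on the invariant locus $\{W_{vv}=0,\ W_{vvv}=0\}$; these computations check out. The paper argues instead by orbit analysis: it normalizes $0$-, $1$- and $2$-jets using transitivity and the stabilizer, quotients $\E^2_1$ by the $9$-dimensional translation group $\mathfrak{A}$, and studies the residual solvable algebra $\langle V_1,V_2\rangle$ with $[V_1,V_2]=-V_2$ acting on $Q^6$; since $V_2$ is nilpotent, any polynomial relative invariant must lie in its kernel, which singles out $z_4=2W_{vv}$ as the unique (linear) relative invariant of negative weight, and similarly for $H_{vvv}$ after restriction. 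What each approach buys: yours is shorter, fully explicit (it exhibits the weights $c'-a_x$ and $c'$), and requires no normal-form machinery; the paper's establishes more than the theorem, namely that these are essentially the \emph{only} relative invariants of negative weight, so the degenerate Kundt conditions are \emph{derived} from the singular-orbit structure rather than merely verified. One small caveat on your closing remark: what you check is the infinitesimal condition $\Ll_{\xi^{(k)}}f=\omega(\xi)f$, and since $\Sym$ has several components in the smooth topology, the pseudogroup-level statement $\varphi^*f=s_\varphi f$ requires either the Zariski-connectedness of $\Sym$ noted in Section 2, or a one-line check on the lift formulas, which gives $W_{vv}\mapsto (C'/A_x)\,W_{vv}$ on $\E^2$ and $H_{vvv}\mapsto C'H_{vvv}$ on the restricted submanifold; this is harmless, and consistent with the paper's own identification of the two conditions, but worth making explicit.
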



The same idea can be applied in higher dimensions. The Lie algebra spanned by $V_1$ and $V_2$ is then replaced by an $(n-1)$-dimensional Lie algebra from which information about singular orbits can be read. In this case, $W_{vv}$ should be considered as a tensorial relative invariant (covector), whose corresponding zero-set has codimension greater than $1$. Then the theorem holds true in higher dimensions as well.



\end{document}